\theoremstyle{definition}
\newtheorem{cor}{Corollary}
\newtheorem{thm}{Theorem}
\newtheorem{prop}{Proposition}
\newtheorem{lem}{Lemma}
\newtheorem{Remark}{Remark}
\newtheorem{defi}{Definition}
\begin{document}

\title{The morphology of MSS-sequences in a    wide class of  unimodal maps, its structure and decomposition}
    
\author[1]{Jesús San Martín}
\author[2]{Antonia González Gómez}
\author[2]{Fernando Blasco}

	\affil[1]{Universidad Politécnica de Madrid. \\ ETSIDI. Ronda de Valencia, 3. 28012 Madrid, Spain. e-mail:jsm@dfmf.uned.es. Corresponding author}
	\affil[2]{Universidad Politécnica de Madrid. \\ ETSIMFMN Avda. de Las Moreras s/n. 28040 Madrid, Spain. e-mail:\{antonia.gonzalez,fernando.blasco\}@upm.es}

\date{}

\maketitle

\begin{abstract}
The MSS-sequences (U-sequences) in a wide class of unimodal maps have the look $\mathrm{P}=(\mathrm{R} \mathrm{L}^{q})^{n_1} \mathrm{S}_1(m_1,q-1)  (\mathrm{R}   \mathrm{L}^{q})^{n_2}\mathrm{S}_2(m_2,q-1) $ $\ldots$ $ (\mathrm{R} \mathrm{L}^{q})^{n_r}$ $ \mathrm{S}_r(m_r,q-1)\mathrm{C},$ where  $\mathrm{S}_i(m_i, q-1)$ are sequences of $\mathrm{R}$s and $\mathrm{L}$s that
 contain at most  $q-1$ consecutive $\mathrm{L}$s.    The first block $\mathrm{RL}^q$ and the sequence $\mathrm{S}_1$  following it are essential for  an admissible sequence to be a MSS-sequence.
 {Moreover $\mathrm{S}_i(m_i,q-1), \ i=2, \ldots, r$  are determined  by $\mathrm{S}_1(m_1,q-1)$}.  Explicit structure of  MSS-sequences will be given  as well as the  theorems that decompose the non-primary MSS-sequences. The cardinality will be calculated for some important sets of non-primary MSS-sequences and an algorithm to generate the blocks   $\mathrm{S}_i(m_i,q-1), \ i=1, \ldots, r$
 will be provided, as the construction of the blocks $\mathrm{S}_i(m_i,q-1)$ allows the construction of the MSS-sequences.
\end{abstract}

\section{Introduction}
The key point of this paper is focused on making explicit the structure and  the construction of MSS-sequences of one-dimensional discrete systems 
\begin{equation} \label{eqn1}
x_{n+1}=f_\lambda(x_n),  \ f_\lambda: \mathrm{I}\rightarrow \mathrm{I},  \ \mathrm{I}=[a,b]
\end{equation}
ruled by unimodal functions \cite{collet} {(
the conditions that those unimodal maps should satisfy will be stated later). This goal is  motivated both by physical and mathematical reasons.

Many physical systems are strongly dissipative because their flows are more contracted along the stable manifolds than are expanded along the unstable manifolds around the equilibria, as a result flows can be characterized through one-dimensional return maps. Furthermore, if the contraction rate is strong enough, then one can consider that the return map turns out to be unimodal for all practical purposes, even for high-dimensional flows \cite{cv}. The advantage of  physical dynamical systems ruled by  unimodal map is that these systems show an universal behavior under rather general conditions: all maps have the same bifurcation diagram \cite{ref_1}, always appear the same sequences {(MSS-sequences)} with the same order of occurrence \cite{collet, ref_2, beyer, wang}
(an algorithm for  generation of  these sequences is given in reference \cite{ref_2}),  the combinatorial properties of the system determine the geometrical properties discovered by Feigenbaum \cite{ref_3}. Therefore, the physical systems inherit this universal behavior, resulting that highly dissipative dynamical systems, ruled by very different differential equations, can be addressed as a single one.
\par
The shape of the unimodal map induces a natural partition  between left and right of its critical point (denoted by $\mathrm{C}$),  partitions are labeled as  $\mathrm{L}$ (left) and $\mathrm{R}$ (right). As results,  the iterates of a point by $f_{\lambda}$ are coded by a sequence of symbols $\mathrm{R}$  and $\mathrm{L}$: { the  itinerary of the point \cite{collet}.
The opposite is not true since not every sequence
$\mathrm{R}$s and  $\mathrm{L}$s (admissible sequence \cite{collet}) is associated to the itinerary of a point. This  combinatorial description of the dynamics, with $\mathrm{R}$s and  $\mathrm{L}$s,   goes back to the work of {Beyer, Mauldin and  Stein -BMS-}, who gave  a very simple criterion \cite{beyer} for recognizing whether or not an {admissible sequence is a MSS-sequence: for the class of unimodal round-top, concave functions, if the admissible sequence is shift-maximal then it is a MSS-sequence.}
\par
Nonetheless, despite the fact that much time has passed since the criterion was established, and the very simplicity  of the  criterion,  there is one question that still remains open and that it is necessary to answer in order to complete the combinatorial description. This question is, what are specifically { these universal MSS-sequences}? And, in particular, how are they built and what is the relevant information derived from them?
We shall answer these questions in order to complete the combinatorial description of these systems.
\par
From a mathematical point of view, the importance of solving this problem is not only  found in completing the combinatorial description, but the dynamical system is  completely characterized by these sequences, so  it is necessary to understand them.
\par
Building patterns by  the traditional trial and error method might be assumed at first glance the natural approach to finding the MSS-sequences. As the grammar of this kind of sequences has only two letters,  $\mathrm{R}$ (right) and  $\mathrm{L}$ (left), one would be tempted to combine the two letters in order to build  sequences and then using the BMS-criterion for recognizing whether or not an admissible sequence is a MSS-sequence. This approach would be hopeless. The number of patterns grow exponentially (variations with repetition)  with the period of sequences and the problem becomes rapidly intractable. Sequences of length  as short as $15$ generate $2^{15}$ different patterns, and it is totally useless to look for patterns from which we can derive some specific rule.
\par
We need a rule that dramatically decreases the number of the posible patterns in which to focus our attention. Let us note that the itinerary of the critical point (kneading sequence) belongs to $[f^2(\mathrm{C}), f(\mathrm{C})]$ and in particular the minimum value  $f^2(\mathrm{C})$   corresponds to the first $\mathrm{L}$ of the sequence, hence, any other  $\mathrm{L}$ of the sequence will correspond to a bigger value. On the other hand, $\mathrm{L}$s  of the sequence correspond to points belonging to  $[f^2(\mathrm{C}),\mathrm{C})$  where $f$ es increasing. So, if $f^2(\mathrm{C})$  leaves the interval  $[f^2(\mathrm{C}),\mathrm{C})$  after $q$ iterations, then any other  sequence point belonging to $[f^2(\mathrm{C}),\mathrm{C})$  ---that is, points associated with $\mathrm{L}$s--- will leave the interval after $q$ iterations at most. It follows the well-known result that {MSS}-sequences cannot have consecutive sequences of $\mathrm{L}$s  longer than the first consecutive sequence of $\mathrm{L}$s. As all sequences start as $\mathrm{CRL}^q$, its consecutive sequence of $\mathrm{L}$s  will have $q$ $\mathrm{L}$s at the most. That will determine the first brick to complete the building: the  $\mathrm{RL}^q$ block. Obviously, between two consecutive  $\mathrm{RL}^q$ blocks there cannot be consecutive sequences of $\mathrm{L}$s longer that $q-1$. Therefore, the structure of the {MSS}-sequences is 
$ (\mathrm{RL}^q)^{n_1} \mathrm{S}_1 (\mathrm{RL}^q)^{n_2} \mathrm{S}_2 \ldots (\mathrm{RL}^q)^{n_k} \mathrm{S}_k \mathrm{C}$. Where  $\mathrm{S}_i,  i=1,..,k$  are sequences of $\mathrm{R}$s  and $\mathrm{L}$s, with at most $q-1$ consecutive $\mathrm{L}$s. 
The original problem has now become: 1.- determine the values of $n_i$ that give the possibility of having MSS-sequences,  2.- determine the sequences $\mathrm{S}_i$.
We find that, surprisingly, the blocks $\mathrm{S_i}$, $i>1$ are controlled by $\mathrm{S}_1$, which is located between the first two $\mathrm{RL^q}$ blocks (both $\mathrm{S_1}$ and $\mathrm{S_i}$ are calculated in section IV). Broadly speaking, MSS-sequences are built by linking sequences $(\mathrm{RL}^q)^{n_i} \mathrm{S}_i$,  with $\mathrm{S}_i$  ruled by $\mathrm{S_1}.$ What seemed to be an intractable puzzle of combinations will be reduced to the combination of two blocks according to a far more restrictive rules than the original problem, it will allow us to obtain our goal of determining the explicit structure of the MSS-sequences  (section III).
\par
Obviously, once MSS-sequences have been identified, the following step will be to study how these structures are composed in the sense of Derrida, { Gervois and Pomeau} -DGP- \cite{De}, that is, we will identify the non-primary sequences and by using several theorems will decompose them as compositions of primary sequences (section V). Notice that characterizing  primary periodic sequences is characterizing the basic bricks with which the bifurcation diagram is built, because all periodic sequences of the diagram are either primary or the composition of primary sequences.
\par
Finally, we will be in a position to calculate the cardinality of some sets
of non-primary sequences (section VI). 
\par
In section VII we will indicate how those results can be useful to solve open problems in dynamical systems.
\par
\section{\label{sec:defs}Definitions, notations and previous theorems.}

 {Let   $\mathrm{P}=\mathrm{A_1}\, \mathrm{A}_{2} \,  \ldots  \,\mathrm{A}_{i}\, \ldots \mathrm{A}_p$  be a finite sequence where $\mathrm{A}_{i}=\mathrm{R}$ or $\mathrm{L}$ for $i=1, \ldots, p-1$ and  $\mathrm{A}_p=\mathrm{C}$. Those sequences are called admissible   \cite{beyer}. BMS 
 defined  a  linear  order  on  $\mathrm{P}$ {according to Collet and Eckman  \cite{collet}. They call  this  linear  order  a  parity-lexicographic    ordering.     First,  put  $\mathrm{L}  < \mathrm{C} < \mathrm{R}$.  Let   $\mathrm{P^{1}}$ and $\mathrm{P^{2}}$ be two sequences in   $\mathrm{P}$. Let $i$  be the first  index  where they differ, $\mathrm{A^{1}}_{i}\neq \mathrm{A^2}_{i}$. If 
$i=1$     then   $\mathrm{P^{1}}< \mathrm{P^{2}}$    iff  $\mathrm{A^{1}_{1}}<  \mathrm{A^{2}_{1}}$.     Suppose  $i>1$.    In   case  $\mathrm{A^1_1}\, \mathrm{A^1}_{2} \,  \ldots  \,\mathrm{A^1}_{i-1}= \mathrm{A^2_1}\, \mathrm{A^2}_{2} \,  \ldots  \,\mathrm{A^2}_{i-1}$,  have an even number of $\mathrm{R}$s then $\mathrm{P^{1}}< \mathrm{P^{2}}$ iff $\mathrm{A^{1}_i}< \mathrm{A^{2}_i}$  and in case there are an odd  number  of  $\mathrm{R}$s,  then  $\mathrm{P^{1}}< \mathrm{P^{2}}$  iff  $\mathrm{A^{2}_i}< \mathrm{A^{1}_i}$. 
An  admissible sequence $\mathrm{P}$ is called  shift  maximal  if  it  is greater than  or 
equal to  each of its  right  shifts. }

The iterates of a point are easily associated with admissible sequence by using the itinerary of the point. Given $f:[0,1] \to [0,1]$, the itinerary \cite{collet,beyer}  of the point  $x \in [0,1]$ is the admissible finite sequence $\mathrm{I^f} (x)=\mathrm{P}=
\mathrm{A_1}\, \mathrm{A}_{2} \,  \ldots  \,\mathrm{A}_{i}\, \ldots \mathrm{A}_p$,
where $\mathrm{A_i}=\mathrm{R (L)}$ if $f^i(x)>\frac{1}{2} \ (<\frac{1}{2}) $ and $\mathrm{A_i} =\mathrm{C}$ if $f^i (x) = \frac{1}{2}.$

{An admissible sequence is turned into a sequence of numbers by using the $\lambda$-sequence (the $\lambda$-sequence eases the comparisons and make more compact proofs).

\begin{defi}\label{d1}   \cite{De}
{Let  $\mathrm{P}=\mathrm{A_1}\, \mathrm{A}_{2} \,  \ldots  \,\mathrm{A}_{i}\, \ldots \mathrm{A}_p$  be an admissible  sequence.} 
Let $\mathrm{\beta}(\mathrm{A_i})$ be  the number of $\mathrm{R}$s previous to 
$\mathrm{A}_{i}. $
The   $\lambda-$sequence of $\mathrm{P}$, denoted by $\lambda_{\mathrm{P}}$  or  $\lambda_{\mathrm{A}_1,\ldots, \mathrm{A}_p},$ is the sequence
$(a_1, \ldots,a_{p-1},a_p)$ with 
$$
a_i =
\begin{cases}
(-1)^{\beta(A_{i})} & \text{ if }  \mathrm{A}_i=\mathrm{R} \\
(-1)^{\beta(A_i) + 1} & \text{  if }  \mathrm{A}_i=\mathrm{L} \\
0 &\text{ if } \mathrm{A}_i=\mathrm{C} \\
\end{cases}
$$
\end{defi}

Given  $\lambda_\mathrm{P}=(a_1, a_2, ....a_{p})$, the  \emph{shift operator} $\sigma$  is defined  
as usual by
$\sigma^k(\lambda_{\mathrm{P}})=\sigma^k(a_1, a_2, \ldots , a_{p})=(a_{k+1}, a_{k+2}, \ldots , a_{p}, \underbrace{0,\ldots,0}_{k})$ for $k=1, \ldots,p$.  
 Given the sequence $\mathrm{P}=\mathrm{A}_1\, \mathrm{A}_{2}  \ldots\mathrm{A}_p$  it follows that  $\sigma^k({\mathrm{P}})=\sigma^k(\mathrm{A}_1  \ldots  \mathrm{A}_{p})=\mathrm{A}_{k+1}\, \ldots \, \mathrm{A}_{p}$ 

\medskip

We have that $(a'_1, a'_2, \ldots , a'_{p-1})<(a_1, a_2, \ldots , a_{p-1})$
 if  $a'_i< a_i$, where $i$ is the least integer $i$ such that  $a'_i\neq a_i$.
 An useful method when comparing two $\lambda-$sequences is identifying the place where they begin to be different. Graphically 
we will write both $\lambda-$sequences  in parallel  with a vertical line 
 indicating the place where they start to be different
$$
\begin{array}{r|c}
(a_1, a_2, \ldots , a_{i-1}, & a'_{i} \ldots , a'_{p-1})\\(a_1, a_2, \ldots , a_{i-1},& a_{i}, \ldots ,a_{p-1})\end{array}
$$

Every MSS-sequence starts with
 $\mathrm{RL}^q$, 
 therefore from now on we will focus on sequences $\mathrm{RL}^q\mathrm{C}$ if their length is $p=q+2$ and 
$ \mathrm{P}=\mathrm{R}\, \mathrm{L}^{q} \,  \mathrm{R} \,\mathrm{A}_{q+3}\, \mathrm{A}_{q+4}\cdots \mathrm{A}_{p-1}\mathrm{C},$
otherwise. The last one has as   $\lambda-$sequence
  $\lambda_\mathrm{P}=({\underbrace{1}_{\substack{\mathrm{R}}}},
{\underbrace{1,\ldots,1}_{\substack{\mathrm{L}^{q}}}}, \underbrace{-1}_{\substack{\mathrm{R}}},\ldots)$

Notation. \label{re1} For convenience, we will use the following notations:
 
 a) $1_k$ $(-1_k)$ will denote a consecutive sequence with $k$ $1$s ( $-1$s). 
 
 b) $\pm \, 1_k$ ($\mp \, 1_k$) an alternated sequence consisting on $1$s and $-1$s,  starting with $+1$ ($-1$) and  length $k$ .

 c)  $0_k$ will denote a consecutive sequence with $k$ $0$s.
 
 With the new notation, 
  $$\lambda_P=({\overbrace{1}^{\substack{\mathrm{R}}}},
{\overbrace{1,\ldots,1}^{\substack{\mathrm{L}^{q}}}}, \overbrace{-1}^{\substack{\mathrm{R}}},\ldots)=(1_{q+1},-1, \dots)$$

\begin{defi}  \cite{beyer}
An  unimodal  round  top concave map is
an unimodal and  continuous map
{ $F:\mathrm{[0,1]}\to\mathrm{[0,1]}$   such that}

a) $F (0)=F (1)=0$, $F(\frac{1}{2}) = 1$,
 $F$ is  nondecreasing on $[0,\frac{1}{2}]$ and non-increasing on $[\frac{1}{2}, 1]$

b) is concave 

c)  there  exists 
$ e\in (0,\frac{1}{2})$  such that  $F '$   exists and is continuous  in  $(e, 1-e)$   and $F'(\mathrm{\frac{1}{2}})   = 0$.

\end{defi}

\begin{thm}{\label{t1}}  \cite{beyer}
 { Let $F$ be a unimodal round-top  function.  For  each shift-maximal  sequence  $\mathrm{P}$ there is a value of $\lambda$ such 
that    $\mathrm{I}^{\lambda \, F} (\lambda) = \mathrm{P}$. In  particular,  each MSS sequence occurs. }
\end{thm}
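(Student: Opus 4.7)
The plan is to follow the classical Beyer--Mauldin--Stein intermediate--value argument for the one--parameter family $F_\lambda(x)=\lambda F(x)$ with $\lambda\in(0,1]$. Since $F(1/2)=1$ we have $F_\lambda(1/2)=\lambda$, so the itinerary in the statement is exactly the \emph{kneading sequence} $K(\lambda):=\mathrm{I}^{F_\lambda}(\lambda)$. The target is then reformulated as: the map $\lambda\mapsto K(\lambda)$ attains every shift--maximal admissible sequence.

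First I would verify, essentially by definition, that every kneading sequence is shift--maximal. For any $x\in[0,1]$, the itinerary of $x$ is parity--lexicographically dominated by $K(\lambda)$, because $x\le F_\lambda(1/2)$ and iterates of $x$ never exceed the iterates of $1/2$ in the parity order induced by the orientation of $F_\lambda$ on the two sides of the critical point. Applying this with $x=F_\lambda^{\,k}(1/2)$ for each $k\ge 1$ yields shift--maximality of $K(\lambda)$.

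Second, the heart of the argument is the \textbf{monotonicity of the kneading invariant}: if $\lambda_1<\lambda_2$, then $K(\lambda_1)\le K(\lambda_2)$ in the parity--lexicographic order. I would prove this by induction on the first index at which the two orbits of $1/2$ produce different symbols, using that $F$ is unimodal and concave: for fixed $x$, $F_\lambda(x)$ is strictly increasing in $\lambda$, and the parity--lexicographic order is exactly the order inherited on itineraries from the real order on points through a unimodal monotone map on each half. Thus a larger $\lambda$ pushes the critical orbit farther to the right in the order on points, which translates to a larger itinerary in the parity--lexicographic order.

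Third, I would combine monotonicity with a surjectivity argument. Given a prescribed shift--maximal $P$ (finite, ending in $\mathrm{C}$), consider $\Lambda_P^{-}=\{\lambda:K(\lambda)<P\}$ and $\Lambda_P^{+}=\{\lambda:K(\lambda)>P\}$. By monotonicity these are, respectively, a lower and an upper set in $(0,1]$; small $\lambda$ gives $K(\lambda)$ starting with many $\mathrm{L}$'s (hence small), and $\lambda=1$ gives the maximal kneading sequence of $F$ which dominates any admissible $P$. Put $\lambda_\star=\sup\Lambda_P^{-}$. The only way $K(\lambda_\star)\ne P$ is that some iterate $F_{\lambda_\star}^{\,k}(1/2)$ falls on a pre--image of the critical point in a way forcing a jump; but precisely this forces $F_{\lambda_\star}^{\,p}(1/2)=1/2$ with itinerary $P$, which is the assertion. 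I would make this rigorous by showing that $\lambda\mapsto F_\lambda^{\,k}(1/2)$ is continuous in $\lambda$, so the only discontinuities of $K$ occur where a future iterate of the critical point equals the critical point, and these are exactly the parameters producing a finite kneading sequence ending in $\mathrm{C}$. Concavity and the hypothesis $F'(1/2)=0$ are used here to rule out pathological behaviour of the critical orbit as $\lambda$ varies.

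The main obstacle I expect is precisely this last step: the itinerary map is not continuous in $\lambda$, and one must argue that the only admissible jumps of $K(\lambda)$ are between two sequences that differ at the position where the critical point is hit, thereby forcing $K(\lambda_\star)=P$ rather than leaving a gap. This requires combining the parity--lexicographic monotonicity with the continuity of the finite--time orbit maps and a careful one--sided limit analysis at the jump parameter $\lambda_\star$.
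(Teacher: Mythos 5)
This theorem is not proved in the paper at all: it is imported verbatim from Beyer--Mauldin--Stein \cite{beyer}, so there is no internal proof to compare against, and your attempt has to be judged against the known argument in that reference.

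Your reduction to the kneading sequence $K(\lambda)=\mathrm{I}^{\lambda F}(\lambda)$ and your first step (shift-maximality of every kneading sequence, which is the content of the companion theorem also quoted from \cite{beyer}) are fine. The genuine gap is your second step. Global monotonicity of $\lambda\mapsto K(\lambda)$ in the parity-lexicographic order is not something you can get ``by induction on the first index where the orbits differ'': the observation that $F_\lambda(x)$ increases in $\lambda$ for fixed $x$ only controls the \emph{first} iterate of the critical point. From the second iterate on, the two critical orbits are at different points, the map is orientation-reversing on $[1/2,1]$, and the larger parameter does not in general keep its critical orbit ``farther to the right''; the symbolic parity bookkeeping does not rescue the real-orbit comparison, it merely records it. Monotonicity of the kneading invariant is in fact a famously deep theorem even for the quadratic family (it requires complex-analytic methods), and it is not known, and not needed, for the class of round-top concave maps considered here. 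Since your third step (taking $\lambda_\star=\sup\{\lambda: K(\lambda)<\mathrm{P}\}$ and analysing the jump) leans entirely on that monotonicity to conclude that $\mathrm{P}$ cannot be skipped, the proof collapses at this point. The BMS argument avoids monotonicity altogether: for a finite shift-maximal $\mathrm{P}=\mathrm{A}_1\cdots\mathrm{A}_{p-1}\mathrm{C}$ one works with the finitely many continuous functions $\lambda\mapsto F_\lambda^{\,k}(1/2)$ and builds, by induction on $k$, a nested sequence of closed parameter intervals on which the itinerary agrees with $\mathrm{A}_1\cdots\mathrm{A}_k$ and at whose endpoints the next iterate lies on opposite sides of the critical point; shift-maximality guarantees these cylinder sets are nonempty, the intermediate value theorem produces the next interval, and concavity plus the round-top condition keep the relevant sets connected. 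Replacing your monotonicity-plus-supremum scheme by this cylinder-set IVT argument is not a cosmetic fix but a different proof.
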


\begin{thm}{\label{t1s}} \cite{beyer}   {Let   $F$   be  unimodal.    For   any   $\lambda \in (0,1)$,  $\mathrm{I}^{\lambda F}(\lambda)$    is  shift maximal.   In  particular,    an  MSS-sequence is  shift    maximal. }
\end{thm}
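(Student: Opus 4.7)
The plan is to combine two ingredients: (i) the invariance of the interval $[0,\lambda]$ under $\lambda F$, which forces every iterate of the critical value back into $[0,\lambda]$, and (ii) the fact that the parity-lexicographic order on itineraries is the ordering inherited from the real line, i.e.\ $x\le y$ implies $\mathrm{I}^{\lambda F}(x)\le \mathrm{I}^{\lambda F}(y)$. Once both are in hand, shift-maximality of $\mathrm{I}^{\lambda F}(\lambda)$ falls out by comparing the image of each iterate with $\lambda$.

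First I would verify the basic containment. Since $F$ is unimodal with $F(1/2)=1$, one has $\lambda F(x)\le \lambda F(1/2)=\lambda$ for every $x\in[0,1]$, so the image of $\lambda F$ lies in $[0,\lambda]$. Consequently $(\lambda F)^k(\lambda)\in[0,\lambda]$ for all $k\ge 1$. Next I would record the standard identification
\[
\sigma^k\bigl(\mathrm{I}^{\lambda F}(\lambda)\bigr)=\mathrm{I}^{\lambda F}\bigl((\lambda F)^k(\lambda)\bigr),
\]
which is immediate from the definition of the itinerary and the shift.

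The technical heart is the monotonicity lemma: if $x,y\in[0,1]$ with $x\le y$, then $\mathrm{I}^{\lambda F}(x)\le \mathrm{I}^{\lambda F}(y)$ in the parity-lexicographic order. I would prove this by induction on the first index $i$ at which the two itineraries differ. On the common prefix $\mathrm{A}_1\cdots \mathrm{A}_{i-1}$, the $i$-th iterates $(\lambda F)^{i-1}(x)$ and $(\lambda F)^{i-1}(y)$ are either both in $[0,1/2]$ (where $\lambda F$ preserves order) or both in $[1/2,1]$ (where it reverses order), depending on $\mathrm{A}_j\in\{\mathrm{R},\mathrm{L}\}$. The number of orientation-reversing steps equals the number of $\mathrm{R}$s in $\mathrm{A}_1\cdots \mathrm{A}_{i-1}$; hence $(\lambda F)^{i-1}(x)\le (\lambda F)^{i-1}(y)$ if that number is even and the inequality reverses if it is odd. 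Comparing these two $i$-th iterates on opposite sides of $1/2$ then gives $\mathrm{A}^x_i\le \mathrm{A}^y_i$ in the even-parity case and $\mathrm{A}^y_i\le \mathrm{A}^x_i$ in the odd-parity case, which is exactly the parity-lexicographic rule of Definition~\ref{d1}.

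With these two tools, the theorem follows quickly: for every $k\ge 1$, $(\lambda F)^k(\lambda)\le \lambda$, whence by the monotonicity lemma
\[
\sigma^k\bigl(\mathrm{I}^{\lambda F}(\lambda)\bigr)=\mathrm{I}^{\lambda F}\bigl((\lambda F)^k(\lambda)\bigr)\le \mathrm{I}^{\lambda F}(\lambda),
\]
which is shift-maximality. The second sentence of the theorem is immediate, since by Theorem~\ref{t1} each MSS-sequence is realized as $\mathrm{I}^{\lambda F}(\lambda)$ for some $\lambda\in(0,1)$. The principal obstacle is the monotonicity lemma: the induction step has to handle the orientation reversal on $[1/2,1]$ cleanly, and the parity bookkeeping must be aligned precisely with the sign convention in Definition~\ref{d1}; boundary cases where an iterate equals $1/2$ (yielding $\mathrm{C}$) must also be checked, but they reduce trivially to finite itineraries.
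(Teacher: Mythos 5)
The paper offers no proof of Theorem~\ref{t1s} to compare against: it is quoted verbatim from Beyer--Mauldin--Stein \cite{beyer} and used as an imported black box. Your argument is the classical kneading-theory proof of that result (the critical value $\lambda=\lambda F(1/2)$ is the maximum of the image, the itinerary map is monotone for the parity-lexicographic order, and the shift of an itinerary is the itinerary of the iterate), and its mathematical substance is sound. One caveat you should fix: under the paper's convention that the $i$-th symbol of $\mathrm{I}^{f}(x)$ records the position of $f^{i}(x)$ rather than of $f^{i-1}(x)$, your monotonicity lemma as literally stated ($x\le y$ implies $\mathrm{I}^{\lambda F}(x)\le \mathrm{I}^{\lambda F}(y)$) is false: take $x<y$ both in $(1/2,1]$ with $\lambda F(x)>1/2>\lambda F(y)$, so the itineraries begin $\mathrm{R}\ldots$ and $\mathrm{L}\ldots$ respectively and $\mathrm{I}^{\lambda F}(x)>\mathrm{I}^{\lambda F}(y)$. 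Your own induction in fact proves the lemma for the zero-based convention in which $\mathrm{A}_1$ is the address of $x$ itself (you compare $(\lambda F)^{i-1}(x)$ with $(\lambda F)^{i-1}(y)$ at symbol $i$), and that is also the convention under which the conclusion $\sigma^k(\mathrm{I}(\lambda))=\mathrm{I}((\lambda F)^k(\lambda))\le\mathrm{I}(\lambda)$ expresses the intended statement, since MSS-sequences begin with the $\mathrm{R}$ recording the position of $\lambda$ itself. Either adopt that indexing throughout or restate the lemma as ``$\lambda F(x)\le\lambda F(y)$ implies $\mathrm{I}^{\lambda F}(x)\le\mathrm{I}^{\lambda F}(y)$''; with monotone (not necessarily strictly monotone) branches the weak inequality still propagates, which is all that shift-maximality requires. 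A last small point: the ``in particular'' clause follows because an MSS-sequence is by definition the itinerary of a superstable orbit of the critical value, hence of the form $\mathrm{I}^{\lambda F}(\lambda)$; Theorem~\ref{t1} asserts the converse direction (every shift-maximal sequence is realized) and is not the right justification there.
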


In the construction of the MSS-sequences we well find that some patterns are not shift maximal and, according to theorem \ref{t1s}, they are not MSS-sequences, thus we will reject those patterns, whereas theorem \ref{t1} will be used later to obtain the explicit aspect of MSS sequences. From now on we will work with the unimodal maps that verify the conditions given in theorem \ref{t1}.

{Now the question is dealing with the shift-maximal sequences that appear in theorem \ref{t1}. We need an operational method that allows to make explicit  the structure of the MSS-sequences. This operational method was given by DGP
\cite{De}, that translated the admissible sequences $\mathrm{P}$ into number sequences, the so-called $\lambda-$sequences $\lambda _\mathrm{P}$; using the sequence $\lambda _{\mathrm{P}}$, the shift-maximality condition $\sigma^k(\mathrm{P})<\mathrm{P}$ for each $k$ is expressed as $\pm \sigma^k(\lambda_{\mathrm{P}})< \lambda _{\mathrm{P}}$ for each $k$ (DGP \cite{De} give a theorem that allows constructing the MSS-sequences in the way it is done in this paper, but the conditions given in theorem \ref{t1}  are weaker, so we use the formulation of BMS instead of DGP).
The $\lambda_{\mathrm{P}}$ sequence has an operational advantage to the admissible sequence $\mathrm{P}$ when we make comparisons in order to check if $\mathrm{P}$ is shift-maximal. When we compare $\sigma^k(\mathrm{P})$ and $\mathrm{P}$ we must calculate the parity of the fragment that is common to $\sigma^k(\mathrm{P})$ and $\mathrm{P}$ and, obviously, this must be done for each $k.$ When we use $\lambda _{\mathrm{P}}$ we do not calculate the parity of the common fragment, we just study the worst case for $+\sigma^k(\lambda _{\mathrm{P}})$ and $-\sigma(\lambda _{\mathrm{P}})^k$ and, consequently, the proofs are simpler. 
Remark that either $+\sigma(\lambda _{\mathrm{P}})$ or $-\sigma(\lambda_ {\mathrm{P}})$ begins with $-1$ and it is always less or equal to $\lambda_{\mathrm{P}}$, so we only have to study one case: the case with the worst conditions. On the other hand, the presence of the sign $\pm$ is easy to understand: $\lambda _{\sigma^k(\mathrm{P})}$ can have the opposite sign to $\sigma^k(\lambda _{\mathrm{P}})$ since $\sigma^k(\mathrm{P})$ can change the parity of the common fragment when it is moved to the first position. In addition to the operational advantage derived from the use of $\lambda$-sequences, its use allows a more compact notation and a very simple comparison procedure, as  we have seen above.}

\section{The morphology and structure of the MSS-sequences}

\begin{Remark}\label{re12}
{Given the  {admissible sequence}
$\mathrm{P}=\mathrm{R}\, \mathrm{L}^{q} \,  \mathrm{R} \,\mathrm{A}_{q+2} 
\cdots$  $\mathrm{A}_{p-1}\mathrm{C},$
 either  $\sigma^n(\lambda_{\mathrm{P}} )$ or   $-\sigma^n(\lambda_{\mathrm{P}} )$ starts with $-1$. The one starting with $-1$ always verifies  the condition described in theorem {\ref{t1}} as  $\lambda_\mathrm{P}=(1_{q+1}, {-1},\ldots)$ {i.e. either
 $-\sigma^n(\lambda_{\mathrm{P}})<\lambda_{\mathrm{P}}$
or  $\sigma^n(\lambda_{\mathrm{P}})<\lambda_{\mathrm{P}}.$}
 So, to know whether a sequence is shift maximal or not we only need to pay attention to those shifts  $\pm \, \sigma^n(\lambda_\mathrm{P} )$ beginning with $1$. Thus, without loss of generality, we will  always assume that  $\sigma^n(\lambda_\mathrm{P})$  is the sequence that starts with $1$. 
The case $n=p$ must be treated separately since 
we have
$\pm \,\sigma^n(\lambda_{\mathrm{P}} )=(0_p)$, so  $\sigma^p(\lambda_\mathrm{P} )< \lambda_\mathrm{P}$. 
}

Notice that \ $\sigma^k(\lambda_\mathrm{P} )\ = \ (\lambda_{\sigma^k{(\mathrm{P})}}, \overbrace{0,\ldots, 0}^{p-k}),$ 
however
$\sigma^k({\mathrm{P}})=\sigma^k(\mathrm{A}_1  \ldots  \mathrm{A}_{p})=$ $\mathrm{A}_{k+1}\, \ldots \, \mathrm{A}_{p} $ without filling with $0$s in the end (see Definition \ref{d1}). The convenience of this fact will be used in proofs. Since $0$s do not play any role in the proof, by abuse of notation we will write that  $\sigma^k(\lambda_\mathrm{P} )= \lambda_{\sigma^k{(\mathrm{P})}},$ 
using them interchangeably.
\end{Remark}

\begin{lem} \label{l1}
The  {admissible  sequences} $ \mathrm{P}=\mathrm{R}\, \mathrm{L}^{q} \,  \mathrm{R} \ldots  \mathrm{A}_{h} $ $ \mathrm{A}_{h+1}\ldots \mathrm{A}_{h+j}\ldots \mathrm{A}_{p-1}\mathrm{C}$  such that $\mathrm{A}_{h}=\mathrm{R}$ and $\mathrm{A}_{h+j}=\mathrm{L}$ for all  $j=1,\ldots,s$ and $s > q$  { are  not shift maximal.}
\end{lem}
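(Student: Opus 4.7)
The plan is to exhibit an explicit shift that witnesses the failure of shift-maximality, using the $\lambda$-sequence machinery set up in the preceding sections rather than working directly with the $R,L$ symbols. By Remark \ref{re12} it suffices to find some $k$ for which $\pm\sigma^k(\lambda_{\mathrm{P}})>\lambda_{\mathrm{P}}$, so my entire attention goes to the shift that aligns with the offending $R L^s$ block.

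First I would set $k=h-1$ so that $\sigma^{k}(\lambda_{\mathrm{P}})$ begins at the symbol $\mathrm{A}_h=\mathrm{R}$, followed by $\mathrm{A}_{h+1}=\cdots=\mathrm{A}_{h+s}=\mathrm{L}$ with $s>q$. Let $b=\beta(\mathrm{A}_h)$. Since no $\mathrm{R}$ occurs between positions $h$ and $h+s$, Definition \ref{d1} gives $\beta(\mathrm{A}_{h+j})=b+1$ for all $1\le j\le s$, so
\[
a_h=(-1)^b,\qquad a_{h+j}=(-1)^{(b+1)+1}=(-1)^b,\quad j=1,\dots,s.
\]
Hence the first $s+1$ entries of $\sigma^{k}(\lambda_{\mathrm{P}})$ all equal $(-1)^b$. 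Multiplying, if necessary, by $(-1)^b$ produces a sequence of the form $(1_{s+1},\,\varepsilon\,a_{h+s+1},\dots)$ with $\varepsilon=\pm1$; by the very remark cited above, this is exactly the representative $\pm\sigma^{k}(\lambda_{\mathrm{P}})$ we must test against $\lambda_{\mathrm{P}}=(1_{q+1},-1,\dots)$.

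The comparison is then immediate: the two sequences agree on the first $q+1$ coordinates (all equal to $1$), and since $s>q$ implies $s+1\ge q+2$, the $(q+2)$-th entry of $\pm\sigma^{k}(\lambda_{\mathrm{P}})$ is $1$, whereas the $(q+2)$-th entry of $\lambda_{\mathrm{P}}$ is $-1$. Using the parallel-comparison diagram of Section \ref{sec:defs},
\[
\begin{array}{r|c}
(1,1,\ldots,1,& 1,\ldots)\\
(1,1,\ldots,1,& -1,\ldots)
\end{array}
\]
yields $\pm\sigma^{k}(\lambda_{\mathrm{P}})>\lambda_{\mathrm{P}}$, so $\mathrm{P}$ is not shift maximal.

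The only genuine subtlety, and the step I would be most careful to write out, is the parity bookkeeping that forces $s+1$ consecutive entries of $\sigma^{k}(\lambda_{\mathrm{P}})$ to share the same sign; once that is in place, invoking the freedom to flip the global sign (Remark \ref{re12}) reduces the proof to a one-line lexicographic comparison. Edge cases to mention briefly: the shift considered is not $k=p$ (so the zero-padding caveat in Remark \ref{re12} is irrelevant), and the hypothesis $h\ge 1$ together with the fact that $\mathrm{A}_{h+s}$ is a bona fide letter of $\mathrm{P}$ (not $\mathrm{C}$) ensures that the indices $h+1,\dots,h+s$ all lie within the admissible range.
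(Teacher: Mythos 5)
Your proof is correct and follows essentially the same route as the paper: both take the shift $\sigma^{h-1}$ aligning with the block $\mathrm{A}_h\mathrm{L}^s$, normalize its sign so it begins with $1_{s+1}$, and compare lexicographically with $\lambda_{\mathrm{P}}=(1_{q+1},-1,\dots)$ at position $q+2$. The only difference is that you spell out the parity bookkeeping ($\beta(\mathrm{A}_{h+j})=b+1$, hence $a_{h+j}=(-1)^b$) that the paper leaves implicit, which is a welcome addition.
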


\begin{proof}
Let us write $s=q+(s-q).$ 
It results, by using definition \ref{d1}, that
$$\begin{array}{r|l}
\lambda_\mathrm{P}=(1,{ 1}_{\substack{q}}, & -1, \ldots \ldots)\\
  &\ {\mathbf{\wedge }}  \\
   \sigma^{h-1}(\lambda_\mathrm{P})=(1,{1}_{\substack{q}}, & \ {1}_{{\substack{s-q}}},   \ldots,  ) 
 \hbox{}\end{array}$$

So Theorem  \ref{t1} is not satisfied. 
\end{proof}

\medskip

The lemma states the following: if the first
$\mathrm{R}$ 
of the sequence is followed by $q$ consecutive 
$\mathrm{L}$s 
then a necessary condition for  a sequence to be MSS-sequence is 
that it has a series of, at most, $q$ consecutive 
$\mathrm{L}$s. As we have remarked in the introduction, this is a well known fact that will lead us to an important statement.

\begin{defi}{\label{dS}}    We denote by  $\mathrm{S}(m,h)$ {the set of sequences}  consisting of  $\mathrm{R}$s and $\mathrm{L}$s, with length   $m$, starting with $\mathrm{R}$ 
and containing at most 
$h$ consecutive $\mathrm{L}$s.   $\mathrm{S}(0,h)$ is the empty set. 
  We denote $\mathrm{S}(m_i,q-1)$ by   
   $\mathrm{S}_i$.
 \end{defi}

Having in mind definition \ref{dS} and lemma \ref{l1}, the candidates to MSS-sequences must follow the pattern

 \begin{equation}{\label{ec2}}
\mathrm{P}=(\mathrm{R} \mathrm{L}^{q})^{n_1}\, \mathrm{S}_1(\mathrm{R} \mathrm{L}^{q})^{n_2}\mathrm{S}_2\ldots (\mathrm{R} \mathrm{L}^{q})^{n_r} \mathrm{S}_r\mathrm{C} 
 \end{equation}
Thus, in order to get the MSS-sequences we need to know the values of  $\mathrm{n}_i$  and the $\mathrm{S}_i$.

\begin{prop}{\label{p1}}  Let  be the { admissible  sequences}  $\mathrm{P}=(\mathrm{R} \mathrm{L}^{q})^{n_1}\, \mathrm{S}_1$ $(\mathrm{R} \mathrm{L}^{q})^{n_2}\mathrm{S}_2\ldots (\mathrm{R} \mathrm{L}^{q})^{n_r} \mathrm{S}_r\mathrm{C}$. If $n_1\geq 2$ or $\mathrm{S}_r
= \mathrm{S}(0,q-1)$ with $r\geq 2$ then $\mathrm{P}$ 
  { are  not shift maximal.} 
\end{prop}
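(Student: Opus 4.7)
The proof is accomplished by exhibiting, in each of the two hypotheses, a specific shift $\pm\sigma^n(\lambda_\mathrm{P})$ that strictly exceeds $\lambda_\mathrm{P}$ in lexicographic order, thereby violating shift-maximality via Remark \ref{re12}. The two claims are handled independently, each by a well-chosen $n$.

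For the case $r\geq 2$ with $\mathrm{S}_r$ empty, the plan is to take $n=p-q-2$, so that $\sigma^n(\mathrm{P})=\mathrm{RL}^q\mathrm{C}$, the terminal block. Its $\lambda$-sequence has only a leading $\pm 1_{q+1}$ followed by a $0$, and after choosing the sign so that the shift begins with $+1$ the shift reads $(1_{q+1},0,0,\ldots)$. On the other hand, $\lambda_\mathrm{P}$ starts $(1_{q+1},-1,\ldots)$: the first $q+1$ entries match because $\mathrm{P}$ begins with $(\mathrm{RL}^q)^{n_1}$ ($n_1\geq 1$), and position $q+2$ of $\lambda_\mathrm{P}$ is $-1$, coming from an $\mathrm{R}$ with $\beta=1$. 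At that position the shift reads $0>-1$, delivering the desired strict inequality.

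For the case $n_1\geq 2$, the plan is to take $n=(n_1-1)(q+1)$, so that $\sigma^n(\mathrm{P})$ starts at the $n_1$-th $\mathrm{RL}^q$-block and continues $\mathrm{RL}^q\mathrm{S}_1(\mathrm{RL}^q)^{n_2}\cdots\mathrm{C}$. The sign factor needed to make $\pm\sigma^n(\lambda_\mathrm{P})$ begin with $+1$ is $(-1)^{n_1-1}$ which, after a short parity computation from Definition \ref{d1}, forces the shift's first block to read $1_{q+1}$ (matching $\lambda_\mathrm{P}$'s initial $\mathrm{RL}^q$) and its $(q+2)$-nd entry, obtained from the first $\mathrm{R}$ of $\mathrm{S}_1$ for which $\beta=n_1$, to read $-1$ (again matching $\lambda_\mathrm{P}$'s $(q+2)$-nd entry). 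If $\mathrm{S}_1$ happens to be empty, position $q+2$ of the shift is already $0>-1$ and we are done.

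The decisive observation is that the whole second $\mathrm{RL}^q$-block of $\lambda_\mathrm{P}$ fills positions $q+2$ through $2q+2$ with $-1$, while on the shifted side these positions carry $(-1)^{n_1-1}$ times the $\lambda$-value of the successive characters of $\mathrm{S}_1$ (or of whatever follows $\mathrm{S}_1$ if it is too short). A parity check shows that an $\mathrm{L}$ in $\mathrm{S}_1$ contributes $-1$ (a further match), while an $\mathrm{R}$ in $\mathrm{S}_1$ contributes $+1$, the initial $\mathrm{R}$ of a following $(\mathrm{RL}^q)$-block contributes $+1$, and a terminal $\mathrm{C}$ contributes $0$. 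Because $\mathrm{S}_1\in\mathrm{S}(m_1,q-1)$ has at most $q-1$ consecutive $\mathrm{L}$s after its initial $\mathrm{R}$, the run of $-1$-matches on the shifted side cannot extend past position $q+2+(q-1)=2q+1$. At the first breaking position, which lies in $[q+3,2q+2]$ and hence still inside $\lambda_\mathrm{P}$'s $-1$-block, the shift reads $0$ or $+1$ while $\lambda_\mathrm{P}$ reads $-1$, producing the required strict inequality. The main obstacle is the sign bookkeeping---tracking how the factor $(-1)^{n_1-1}$ interacts with each $\mathrm{R}/\mathrm{L}/\mathrm{C}$-contribution through its $\beta$---but once that is set up, the cap of $q-1$ consecutive $\mathrm{L}$s in $\mathrm{S}_1$ closes the argument in every subcase.
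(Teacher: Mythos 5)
Your proposal is correct and follows essentially the same route as the paper's own proof: for $\mathrm{S}_r$ empty you compare $\lambda_{\mathrm{P}}=(1_{q+1},-1,\ldots)$ with the terminal shift $(1_{q+1},0,\ldots)$, and for $n_1\geq 2$ you shift by $(n_1-1)(q+1)$ and use the cap of $q-1$ consecutive $\mathrm{L}$s in $\mathrm{S}_1$ to locate a $+1$ (or $0$) against a $-1$ inside the second $\mathrm{RL}^q$-block of $\lambda_{\mathrm{P}}$. Your version is somewhat more explicit about the sign bookkeeping and the degenerate subcases (e.g.\ empty $\mathrm{S}_1$), which the paper glosses over, but the shifts chosen and the comparisons made are identical.
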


\begin{proof}
$i)$\,  If  $n_1\geq 2$  then  the block $(\mathrm{RL}^q)^{n_1}$ implies that $\lambda_\mathrm{P}$ starts with $n_1$ sequences $1_{q+1}$ with alternating sign. If we take $n= (n_1-1)(q+1)$
then
$\sigma^n(\mathrm{\lambda_\mathrm{P}})$ shifts $n_1 -1$ blocks with length $q+1$, i.e., we shift every block $\mathrm{RL}^q$  of $(\mathrm{RL}^q)^{n_1}$
except the last one, which generates a $1_{q+1}$ in the $\mathrm{\lambda}-$sequence.

On the other hand, $\mathrm{\lambda}_{\mathrm{S}_1}$ 
starts with $1_k,$
$k<q$ because $\mathrm{S}_1$ has $q-1$ consecutive $\mathrm{L}s$
at most.

Notice that $\mathrm{S}_1$ will be preceded by $\mathrm{RL}^q$ after the shift, so the sign of its $\mathrm{\lambda-}$sequence will change to $-1$.

Writing  $-1_{q+1}=-1_k -1_{q+1-k}$ it results
$$\begin{array}{r|l}
 \lambda_\mathrm{P}=({ 1}_{\substack{q+1}},  {-1}_{k} & {-1}_{q+1-k} \ldots ) \hbox{}   \\ 
  & \ \  {\mathbf{\wedge }}  \\
  \sigma^n (\lambda_P)=({1}_{\substack{q+1}}, {-1}_{{\substack{k}}} & \ \  1   \ldots,  )
  \end{array}\\
  $$ $$
 \Longrightarrow \lambda_\mathrm{P} < \sigma^n (\lambda_\mathrm{P}) \ \Longrightarrow \mathrm{P} \ \text{ is not { shift maximal}} $$
 
$ii)$
\, Let  $n$ be such that $\sigma^n(\mathrm{P})=(\mathrm{R} \mathrm{L}^{q})\, \mathrm{S}_r$. If $\mathrm{S}_r= \emptyset$
then
$$\begin{array}{r|l}
 \lambda_\mathrm{P}=({ 1}_{\substack{q+1}} & {-1}_{q+1} \ldots ) \hbox{}\\
  &\ {\mathbf{\wedge }}  \\
  \sigma^n (\lambda_\mathrm{P})=({1}_{\substack{q+1}} & \ {0}_{{\substack{m_r}}} \  0  )\\ 
\end{array}
$$ $$ \Longrightarrow \lambda_\mathrm{P} < \sigma^n (\lambda_\mathrm{P}) \ \Longrightarrow \mathrm{P} \ \text{ is not { shift maximal}} $$
\end{proof}

The Proposition \ref{p1} has reduced the candidates to MSS-sequences to the following  patterns
$\mathrm{P}=\mathrm{R} \mathrm{L}^{q}\mathrm{S}_1$ $(\mathrm{R} \mathrm{L}^{q})^{n_2}\mathrm{S}_2\ldots (\mathrm{R} \mathrm{L}^{q})^{n_r} \mathrm{S}_r \mathrm{C}$ with $\mathrm{S}_r \neq \emptyset$
and
\ $\mathrm{P}=\mathrm{R} \mathrm{L}^{q} \mathrm{C}$.
Since the latter are { shift maximal} we only have to study the sequences \begin{equation}\label{s2}
\mathrm{P}=\mathrm{R} \mathrm{L}^{q}\, \mathrm{S}_1(\mathrm{R} \mathrm{L}^{q})^{n_2}\mathrm{S}_2\ldots (\mathrm{R} \mathrm{L}^{q})^{n_r} \mathrm{S}_r \mathrm{C} 
 \end{equation} 

\begin{Remark}{\label{r3}}
From $(\ref{s2})$ it results 
 $\lambda_{\mathrm{P}}=( 1_{q+1} -1_k\ldots )$ with $k<q$ as $\mathrm{S}_1$ has $q-1$ consecutive $\mathrm{L}$s at most.
 Thus the shifts generating sequences
 starting with $1_k$ or $-1 _k$, $k<q+1,$ 
 verify the condition described in Theorem \ref{t1}, 
 consequently we only need to pay attention to sequences starting with  $1_{q+1}$.

Notice the following particular cases:
 
 i) If \ $\sigma^n(\mathrm{P})= (\mathrm{RL}^q)^k \mathrm{S}_i\cdots \mathrm{S}_r\mathrm{C}$ with  $k\geq 2$ it follows $\lambda_{\sigma^n (\mathrm{P})}=( 1_{q+1}, -1_{q+1}, \ldots)< \lambda_{\mathrm{P}}$.
 
 \smallskip
 
ii) If \ $\sigma^n(\mathrm{P})= \mathrm{\widehat{S}}_i(\mathrm{RL}^q)^{n_{i+1}}\cdots \mathrm{S}_r\mathrm{C}$ ,  where $\mathrm{\widehat{S}}_i\subset \mathrm{{S}}_i,$ as    $\mathrm{S}_i$ has, at most,  $q-1$ consecutive $\mathrm{L}$s, then $\lambda_{\sigma^n(\mathrm{P})}=( 1_{k},  \ldots)< \lambda_{\mathrm{P}}, \ k<q.$

\smallskip

iii) If \ $\sigma^n(\mathrm{P})= \mathrm{L}^j\mathrm{S}_i\cdots \mathrm{S}_r\mathrm{C}$ with $j=1,\ldots ,q$   then  $\lambda_{\sigma(\mathrm{P})}=( 1_{j},  \ldots)<( 1_{q+1},  \ldots)= \lambda_{\mathrm{P}}.$   

\smallskip

It follows that the only shifts we have to pay attention to are those given by
$\sigma^n(\mathrm{P})= \mathrm{RL}^q \mathrm{S}_i\ldots \mathrm{S}_r\mathrm{C}$ with $\lambda_{{\sigma^n({\mathrm{P}} )}}=( 1_{q+1}, -\lambda_{\mathrm{S}_i},\ldots)$ 
(the change of the sign of $\lambda_{\mathrm{S}_i}$ is due to the $\mathrm{R}$ in the block $\mathrm{RL^q}$ that precedes it) 
and deduce under which conditions those shifts verify that
 $\lambda_{\sigma^n(\mathrm{P})}<\lambda_{\mathrm{P}}$. 
 This will be done in the following theorems.
 
 Two steps are needed. First, study the structure of the  $\mathrm{S_i}$ sequences, $i\geq 2$,
 that, as we will see, are determined by the sequence  $\mathrm{S}_1\mathrm{RL}^q$.
 Second, study the restrictions on the ${n_k}$
  of \ $(\mathrm{RL}^q)^{n_k}.$
 The apparent simplicity of expression $(\ref{s2})$ is tricky.
 Although the first block $\mathrm{RL}^q$  and $\mathrm{S}_1$ 
 are the blocks that will determine if it is shift maximal, there are combinations with repeated blocks that make it necessary to study  $\lambda-$sequences 
 longer than the corresponding to the $\lambda-$sequence of $\mathrm{RL}^q\mathrm{S}_1$. 
 Let us proceed by parts: first of all we shall find the MSS-sequences without a repeated block $\mathrm{RL}^q$.
\end{Remark}

\smallskip

 \begin{thm}\label{t2} The { admissible  sequences}  $$\mathrm{P}=\mathrm{R}\, \mathrm{L}^{q} \, \mathrm{S}(m,q-1) \mathrm{C}$$  are { shift maximal.}
   \end{thm}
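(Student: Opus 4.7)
The plan is to verify directly that every non-trivial shift of $\lambda_{\mathrm{P}}$ is strictly less than $\lambda_{\mathrm{P}}$ itself, exploiting the fact that $\mathrm{P}=\mathrm{RL}^q\,\mathrm{S}(m,q-1)\,\mathrm{C}$ contains exactly one $\mathrm{RL}^q$ block, placed at the very beginning. In particular, $\lambda_{\mathrm{P}}=(1_{q+1},-1,\ldots)$ starts with a block of $q+1$ consecutive $1$s, and I will show that for every $n\geq 1$, the shifted $\lambda$-sequence $\pm\sigma^n(\lambda_{\mathrm{P}})$ (taken with the sign that makes the leading entry $+1$, as justified by Remark \ref{re12}) starts with a run of at most $q$ consecutive $1$s, so it is automatically smaller.

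First I would split into cases according to where the shift lands. If $n$ falls inside the initial $\mathrm{L}^q$ block, i.e.\ $\sigma^n(\mathrm{P})=\mathrm{L}^{j}\,\mathrm{S}(m,q-1)\,\mathrm{C}$ with $1\leq j\leq q$, then part (iii) of Remark \ref{r3} applies: the worst-case sign gives a leading block $(1_{j},\ldots)$ with $j\leq q<q+1$, so $\pm\sigma^n(\lambda_{\mathrm{P}})<\lambda_{\mathrm{P}}$. If instead $\sigma^n(\mathrm{P})$ begins inside the block $\mathrm{S}(m,q-1)\mathrm{C}$, I would use the defining property that $\mathrm{S}(m,q-1)$ has at most $q-1$ consecutive $\mathrm{L}$s. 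Then either the shift begins with an $\mathrm{R}$ followed by at most $q-1$ consecutive $\mathrm{L}$s (so the leading run of $1$s in the $\lambda$-sequence has length at most $q$), or it begins with a run of $\mathrm{L}$s of length at most $q-1$, or it begins with $\mathrm{C}$ (in which case the shifted $\lambda$-sequence is identically zero). In all sub-cases, the leading $+1$-run has length strictly less than $q+1$, and this is precisely the content of parts (ii) and (iii) of Remark \ref{r3}.

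Combining the two cases, comparing with $\lambda_{\mathrm{P}}=(1_{q+1},-1,\ldots)$ position by position: the two sequences agree on an initial segment of $1$s, then $\pm\sigma^n(\lambda_{\mathrm{P}})$ has a $-1$ or a $0$ at a position where $\lambda_{\mathrm{P}}$ still has a $1$. Hence $\pm\sigma^n(\lambda_{\mathrm{P}})<\lambda_{\mathrm{P}}$ for every $n\geq 1$, which is the definition of shift-maximality (via the $\lambda$-sequence reformulation recalled in the remarks preceding the theorem).

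I expect no serious obstacle: the content of the theorem is essentially already extracted in Remark \ref{r3}, which was written to isolate the shifts that matter, namely those starting with a full $\mathrm{RL}^q$ block. Since $\mathrm{P}$ contains only one such block, at position $1$, no non-trivial shift can reproduce it, and the argument reduces to the bookkeeping of leading run-lengths described above. The only place where a little care is needed is in choosing the sign $\pm$ that makes the shifted $\lambda$-sequence start with $+1$ (the other sign is automatically less than $\lambda_{\mathrm{P}}$ by Remark \ref{re12}); once this choice is made, the case analysis is routine.
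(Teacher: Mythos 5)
Your proof is correct and takes essentially the same route as the paper, which disposes of this theorem in one line (``straightforward \ldots as $\mathrm{S}(m,q-1)$ contains at most $q-1$ consecutive $\mathrm{L}$s''); your case analysis simply spells out the run-length bookkeeping that Remarks \ref{re12} and \ref{r3} already isolate, namely that no nontrivial shift of $\lambda_{\mathrm{P}}$ can begin with $1_{q+1}$.
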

   
 \begin{proof}
 It is straightforward by applying theorem 1 as  $\mathrm{S}(m,q-1)$ 
   contains
   ${q-1}$ consecutive $\mathrm{L}$s  at most. 
\end{proof}

\medskip

Note that theorem \ref{t2}, according to theorem \ref{t1}, has provided the set of MSS-sequences with just one group $\mathrm{R L}^q$. So the  next step consists on finding the  {MSS-}sequences in which the group  $\mathrm{R L}^q$
appears more than once.}

\begin{thm}\label{t4}
Let    
$\mathrm{P}=\mathrm{R} \mathrm{L}^{q}\, \mathrm{S}_1 \cdots\mathrm{S}_k (\mathrm{R} \mathrm{L}^{q})^{n_{k+1}}\mathrm{S}_{k+1} \cdots $ $ (\mathrm{R} \mathrm{L}^{q})^{n_r}\mathrm{S}_r \mathrm{C}$ be   admissible  sequences and $k$,   {$1\le k<r$}, such that
$$
\mathrm{RL}^q\,\mathrm{S}_{k+1}(\mathrm{R} \mathrm{L}^{q})^{n_{k+2}}\mathrm{S}_{k+2}\ldots (\mathrm{R} \mathrm{L}^{q})^{n_{k+j}}= \mathrm{RL}^q \, \mathrm{S}_1(\mathrm{R} \mathrm{L}^{q})^{n_2}\mathrm{S}_2\ldots \mathrm{S}_{j-1}(\mathrm{R} \mathrm{L}^{q})^{n_{j}}
$$
with $\mathrm{S}_{j}\neq  \mathrm{S}_{k+j}.$ 
If 
  $(-1)^{\mathrm{\beta}(\mathrm{S}_j)}\lambda_{\mathrm{S}_j(\mathrm{R L^q})}> (-1)^{\mathrm{\beta(S_{j})}} \lambda_{\mathrm{S}_{k+j}}$ then $\lambda_{\sigma^n(\mathrm{P})}<\lambda_{\mathrm{P}}$ where $\sigma^n(\mathrm{P})= \mathrm{RL}^q \mathrm{S}_{k+1}\ldots \mathrm{S}_r\mathrm{C}$.      
  \end{thm}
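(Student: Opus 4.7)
The plan is a direct lexicographic comparison of the $\lambda$-sequences $\lambda_{\mathrm{P}}$ and $\lambda_{\sigma^n(\mathrm{P})}$. First, I would observe that the hypothesized block identity forces $\mathrm{P}$ and $\sigma^n(\mathrm{P})$ to share the common prefix $W := \mathrm{RL}^q\mathrm{S}_1(\mathrm{RL}^q)^{n_2}\mathrm{S}_2\cdots(\mathrm{RL}^q)^{n_j}$. Since $\beta(\mathrm{A}_i)$ at every position inside $W$ is determined by $W$ itself, the first $|W|$ coordinates of the two $\lambda$-sequences coincide, and the comparison is decided at position $|W|+1$: in $\mathrm{P}$ the continuation is $\mathrm{S}_j(\mathrm{RL}^q)^{n_{j+1}}\cdots\mathrm{C}$, in $\sigma^n(\mathrm{P})$ it is $\mathrm{S}_{k+j}(\mathrm{RL}^q)^{n_{k+j+1}}\cdots\mathrm{C}$, and both inherit the \emph{same} global sign multiplier $(-1)^{\beta(\mathrm{S}_j)}$ from the identical $\mathrm{R}$-count of $W$.

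Next I would reduce the remaining comparison to the hypothesis through a short case split on how $\mathrm{S}_j$ and $\mathrm{S}_{k+j}$ diverge. If they already differ within $\min(|\mathrm{S}_j|,|\mathrm{S}_{k+j}|)$ symbols, the $\mathrm{RL}^q$ appended to $\mathrm{S}_j$ in the hypothesis is inert and the stated inequality transfers verbatim to the two continuations. If $\mathrm{S}_j$ is a proper prefix of $\mathrm{S}_{k+j}$, then in $\mathrm{P}$ the genuine continuation of $\mathrm{S}_j$ is $(\mathrm{RL}^q)^{n_{j+1}}\cdots$, whose first $q+1$ symbols are exactly $\mathrm{RL}^q$; and because $\mathrm{S}_{k+j}\in\mathrm{S}(m_{k+j},q-1)$ contains no substring $\mathrm{L}^q$ (Definition \ref{dS}), the part of $\mathrm{S}_{k+j}$ extending past $\mathrm{S}_j$ cannot agree with $\mathrm{RL}^q$ throughout that window, so the first disagreement is forced to occur inside this window and its sign is precisely what the hypothesis records. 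The symmetric case in which $\mathrm{S}_{k+j}$ is the shorter prefix is handled the same way: since $\mathrm{S}_j$ also contains no $\mathrm{L}^q$, its continuation past $\mathrm{S}_{k+j}$ cannot match the $(\mathrm{RL}^q)^{n_{k+j+1}}$ that follows $\mathrm{S}_{k+j}$ in $\sigma^n(\mathrm{P})$ for more than $q$ symbols, so again the decisive coordinate falls inside the $q+1$-long window that the hypothesis controls.

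Assembling these pieces, the first differing coordinate of $\lambda_{\mathrm{P}}$ and $\lambda_{\sigma^n(\mathrm{P})}$ is governed by the sign of $(-1)^{\beta(\mathrm{S}_j)}\bigl(\lambda_{\mathrm{S}_j(\mathrm{RL}^q)}-\lambda_{\mathrm{S}_{k+j}}\bigr)$, which is positive by hypothesis, so $\lambda_{\sigma^n(\mathrm{P})}<\lambda_{\mathrm{P}}$, as required.

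The main obstacle I foresee is the bookkeeping in the prefix-of-each-other cases: one must verify that the extra $\mathrm{RL}^q$ glued to $\mathrm{S}_j$ in the hypothesis captures exactly the part of the true continuation that matters before the unavoidable divergence, and that no additional sign flip sneaks in when translating the local $\lambda$-sequence of $\mathrm{S}_j(\mathrm{RL}^q)$ to its image inside $\lambda_{\mathrm{P}}$. Once the constant-sign behaviour of $\lambda_{\mathrm{RL}^q}=\pm 1_{q+1}$ is contrasted with the absence of $\mathrm{L}^q$ inside any $\mathrm{S}_i$, the argument closes cleanly.
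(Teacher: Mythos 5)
Your proposal is correct and follows essentially the same route as the paper: both exploit the hypothesized block identity to align a common prefix of $\lambda_{\mathrm{P}}$ and $\lambda_{\sigma^n(\mathrm{P})}$, note that the identical $\mathrm{R}$-count of that prefix gives both continuations the same sign factor $(-1)^{\beta(\mathrm{S}_j)}$, and then read off the first differing coordinate from the hypothesis $(-1)^{\beta(\mathrm{S}_j)}\lambda_{\mathrm{S}_j(\mathrm{RL}^q)}>(-1)^{\beta(\mathrm{S}_j)}\lambda_{\mathrm{S}_{k+j}}$. The prefix-of-each-other cases you single out are exactly the subtlety the paper addresses in its discussion immediately after the proof (the possibility $\mathrm{S}_{k+j}=\mathrm{S}_j\mathrm{RL}^n\mathrm{Q}$, $n=0,\dots,q-1$, which is why $\mathrm{S}_j\mathrm{RL}^q$ rather than $\mathrm{S}_j$ appears in the hypothesis), so your explicit case split is a legitimate elaboration of the same argument rather than a different one.
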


\begin{proof}

a) $j=1$. This is the case  $\mathrm{S}_{k+1}\neq \mathrm{S}_{1}$ with $2 \le k+1 \le r$, i.e., the sequence   $\mathrm{RL}^q$ is the only repeated block. 
By hypothesis we have that   $-\lambda_{\mathrm{S}_1 \mathrm{RL}^q} > - \lambda_{\mathrm{S}_{k+1}}$  since $\mathrm{S}_1$ is preceded by just one $\mathrm{R}$,
 so

 $$\begin{array}{r|l} 
 \lambda_{\mathrm{P}}=({ 1}_{\substack{q+1}}, & - \lambda_{\mathrm{S_1}\mathrm{R} \mathrm{L}^{q}}\ldots \ldots) \\
  &\quad {\mathbf{\vee}}  \\
  \lambda_{\sigma^n (\mathrm{P})}= ({ 1}_{\substack{q+1}},  & -\lambda_{\mathrm{S}_{k+1}} \ldots \ldots)\\ 
\end{array}
\Longrightarrow \lambda_{\sigma^n (\mathrm{P})} <  \lambda_{\mathrm{P}}  $$

b) Let $n$ be such that $\sigma^n(\mathrm{P})= \mathrm{RL}^q \mathrm{S}_{k+1}\ldots\mathrm{S}_{k+{j-1}} (\mathrm{R} \mathrm{L}^{q})^{n_{k+j}}\ldots \mathrm{S}_r\mathrm{C}$  it 
follows that 
\begin{eqnarray}
\lambda_{\sigma^n(\mathrm{P})}& = & ({ 1}_{\substack{q+1}}, - \lambda_{\mathrm{S}_{k+1}(\mathrm{R} \mathrm{L}^{q})^{n_{k+2}}\ldots (\mathrm{R} \mathrm{L}^{q})^{n_{k+j}}}, {(-1)^{\mathrm{\beta}(\mathrm{S}_{k+j})}} \lambda_{{\mathrm{S}_{k+j}}} \ldots ) \\ 
&=&({ 1}_{\substack{q+1}}, - \lambda_{\mathrm{S}_{1}(\mathrm{R} \mathrm{L}^{q})^{n_{2}}\ldots (\mathrm{R} \mathrm{L}^{q})^{n_{j}}}, {(-1)^{\mathrm{\beta}(\mathrm{S}_{j})}} \lambda_{{\mathrm{S}_{k+j}}} \ldots ) \notag
\end{eqnarray} 

and having in mind the hypothesis in the theorem it results that

$$\begin{array}{r|l}
 \lambda_{\mathrm{P}}=({ 1}_{\substack{q+1}}, - \lambda_{\mathrm{S}_1(\mathrm{R} \mathrm{L}^{q})^{n_2}\mathrm{S}_2\ldots \mathrm{S}_{j-1}(\mathrm{R} \mathrm{L}^{q})^{n_{j}}} & {(-1)^{\mathrm{\beta}(\mathrm{S}_j)}} \lambda_{{\mathrm{S}_j(\mathrm{R} \mathrm{L}^q)}} \ldots )\\
  &\ {\mathbf{\vee}}  \\
 \lambda_{\sigma^n(\mathrm{P})}= ({ 1}_{\substack{q+1}}, - \lambda_{\mathrm{S_{k+1}}(\mathrm{R} \mathrm{L}^{q})^{n_{k+2}}\mathrm{S_{k+2}}\ldots (\mathrm{R} \mathrm{L}^{q})^{n_{k+j}}} & {(-1)^{\mathrm{\beta(S_{j})}}} \lambda_{{\mathrm{S_{k+j}}}} \ldots )\hbox{}\\ 
\end{array} \Longrightarrow \ \lambda_{\sigma^n(\mathrm{P})}<\lambda_{\mathrm{P}}$$
\end{proof}

 In this proof we are not  paying attention to the shifts 
  that do not pose any problem, such as detailed in Remark
   \ref{r3}.
  
Notice that $\lambda_{\mathrm{S}_{k+j}}$ is multiplied by  ${(-1)^{\mathrm{\beta}(\mathrm{S}_{j})}}$  since  $\lambda_{\mathrm{S}_{k+j}}$ in $\lambda_{\sigma^n(\mathrm{P})}$ 
is preceded by a sequence identical to the one preceding 
$\lambda_{\mathrm{S}_j}$.
The advantage of this, applied to the theorem, is that we only have to calculate  ${\mathrm{\beta}(\mathrm{S}_j)}$   and it is not necessary to calculate ${\mathrm{\beta}(\mathrm{S}_{k+j})}$.

The reader can ask him/herself that why it is not enough to compare $\lambda_{\mathrm{S}_{k+j}}$ with $\lambda _{\mathrm{S}_j}$  but we have to compare 
$\lambda_{\mathrm{S}_{k+j}}$ with $\lambda _{\mathrm{S}_j\mathrm{RL}^q}$. The reason is that it is possible to have $\mathrm{S}_{k+j}=\mathrm{S}_j\mathrm{RL}^n\mathrm{Q}, \ n=0 \dots , q-1.$  Consequently there exists $n$ such that

$$\begin{array}{r|l}
 {\mathrm{P}}= \dots \dots \dots   & {{\mathrm{S}_j(\mathrm{R} \mathrm{L}^q)}} (\mathrm{RL}^q) ^{n_{j+1}-1} \ldots )\\
  &\   \\
 \sigma ^n (\mathrm{P}) =\ldots & 
 {{\mathrm{S}_j(\mathrm{R} \mathrm{L}^n)\mathrm{Q}}}  \ldots )
 \\ 
\end{array}
$$
so we have to compare $\lambda_{\mathrm{S}_{k+j}}$ with $\lambda _{\mathrm{S}_j\mathrm{RL}^q}$.
  
   \smallskip
   
   Notice that Theorem \ref{t4} allows an arbitrary number of repeated sequences which can, and usually will, be different. In fact, Theorem \ref{t4} gives a partial solution of our goal as it is shown in the following corollary.
   
\begin{cor}{\label{c1}} Let    
$\mathrm{P}=\mathrm{R} \mathrm{L}^{q}\, \mathrm{S}_1 \cdots\mathrm{S}_k (\mathrm{R} \mathrm{L}^{q})^{n_{k+1}}\mathrm{S}_{k+1} \cdots $ $ (\mathrm{R} \mathrm{L}^{q})^{n_r}\mathrm{S}_r \mathrm{C}$ be  such that $\mathrm{S}_k\neq \mathrm{S}_1$ and    $\lambda_{(\mathrm{S}_1\mathrm{RL}^q)}<\lambda_{\mathrm{S}_k}$ for each $k\geq 2$, then $\mathrm{P}${ are MSS-sequences.} 

\end{cor}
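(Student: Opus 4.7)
My plan is to prove that $\mathrm{P}$ is shift maximal; Theorem \ref{t1} will then yield that $\mathrm{P}$ is an MSS-sequence. Concretely, I have to check that $\pm\sigma^n(\lambda_{\mathrm{P}}) < \lambda_{\mathrm{P}}$ for every $1 \le n \le p-1$, and the strategy is to use Remarks \ref{re12} and \ref{r3} to discard all the ``obvious'' shifts and then apply Theorem \ref{t4} in its simplest instance ($j=1$) to handle what remains.

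First I will appeal to Remark \ref{re12}: one of $\sigma^n(\lambda_{\mathrm{P}})$ and $-\sigma^n(\lambda_{\mathrm{P}})$ starts with $-1$ and is automatically dominated by $\lambda_{\mathrm{P}}=(1_{q+1},-1,\ldots)$, so I only need to inspect the one that begins with $+1$. Next, Remark \ref{r3} classifies such shifts: the cases (i), (ii), (iii) produce sequences whose initial run of $+1$'s is shorter than $q+1$, or of the form $(1_{q+1},-1_{q+1},\ldots)$, each of which is strictly less than $\lambda_{\mathrm{P}}$. The only shifts still to be checked are therefore those of the form
$$\sigma^n(\mathrm{P}) = \mathrm{R}\mathrm{L}^q\,\mathrm{S}_k\,(\mathrm{R}\mathrm{L}^q)^{n_{k+1}}\,\mathrm{S}_{k+1}\cdots\mathrm{S}_r\mathrm{C}, \qquad k\in\{2,\ldots,r\},$$
whose $\lambda$-sequences are $(1_{q+1},-\lambda_{\mathrm{S}_k},\ldots)$.

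For every such shift I will invoke Theorem \ref{t4} in the case $j=1$. Because the hypothesis of the corollary gives $\mathrm{S}_k\neq \mathrm{S}_1$ for all $k\ge 2$, the matching prefix demanded by Theorem \ref{t4} is simply the initial $\mathrm{RL}^q$ block, and the first position where the two $\lambda$-sequences disagree is precisely the position occupied by $\mathrm{S}_1$ in $\lambda_{\mathrm{P}}$ and by $\mathrm{S}_k$ in $\lambda_{\sigma^n(\mathrm{P})}$. Since $\mathrm{S}_1$ is preceded by exactly one $\mathrm{R}$ in $\mathrm{P}$, we have $\beta(\mathrm{S}_1)=1$, and the inequality required by Theorem \ref{t4}, namely $(-1)^{\beta(\mathrm{S}_1)}\lambda_{\mathrm{S}_1(\mathrm{RL}^q)} > (-1)^{\beta(\mathrm{S}_1)}\lambda_{\mathrm{S}_k}$, reads $\lambda_{\mathrm{S}_1\mathrm{RL}^q} < \lambda_{\mathrm{S}_k}$, which is exactly what is assumed. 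Theorem \ref{t4} then produces $\lambda_{\sigma^n(\mathrm{P})} < \lambda_{\mathrm{P}}$.

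I do not anticipate a real obstacle here: the corollary is in essence a packaging of Theorem \ref{t4}(a) together with the bookkeeping in Remark \ref{r3}. The only point worth explicit attention is the justification that the $j>1$ branch of Theorem \ref{t4} never intervenes, which is immediate from the uniform hypothesis $\mathrm{S}_k\neq\mathrm{S}_1$ for every $k\ge 2$; once this is observed, gathering the cases gives shift-maximality of $\mathrm{P}$ and hence, by Theorem \ref{t1}, that $\mathrm{P}$ is an MSS-sequence.
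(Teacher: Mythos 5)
Your proposal is correct and follows essentially the route the paper intends: the paper states Corollary \ref{c1} without a separate proof, presenting it as an immediate consequence of Theorem \ref{t4} (case $j=1$) once Remarks \ref{re12} and \ref{r3} have reduced the problem to the shifts $\sigma^n(\mathrm{P})=\mathrm{RL}^q\mathrm{S}_{k}\cdots\mathrm{S}_r\mathrm{C}$, and your observation that $\beta(\mathrm{S}_1)=1$ turns the hypothesis of Theorem \ref{t4} into exactly $\lambda_{\mathrm{S}_1\mathrm{RL}^q}<\lambda_{\mathrm{S}_k}$ is the right bookkeeping. Your explicit note that the uniform assumption $\mathrm{S}_k\neq\mathrm{S}_1$ keeps the argument in the $j=1$ branch (so Theorem \ref{t5} is never needed) makes the implicit reasoning of the paper fully precise.
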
 

In broad terms, Theorem \ref{t4} 
controls the repeated sequences ending in  $(\mathrm{R} \mathrm{L}^{q})^{n_j}$. 
But, as we had already noticed in the introduction, we need also to control  $n_k$ of $(\mathrm{R} \mathrm{L}^{q})^{n_k}$. 
Next theorem will do this for the repeated sequences ending in groups $\mathrm{ S}_j$.

 \begin{thm}{\label{t5}} Let    
$\mathrm{P}=\mathrm{R} \mathrm{L}^{q}\, \mathrm{S}_1(\mathrm{R} \mathrm{L}^{q})^{n_2}   \cdots (\mathrm{R} \mathrm{L}^{q})^{n_r}\mathrm{S}_r \mathrm{C}
$ with $k,$  $1\leq k<r$,   be  {admissible sequences} such that
\begin{equation}
\mathrm{S}_{k+1}(\mathrm{R} \mathrm{L}^{q})^{n_{k+2}}\ldots (\mathrm{R} \mathrm{L}^{q})^{n_{k+j}}\mathrm{S}_{k+j}
=\mathrm{S}_1(\mathrm{R} \mathrm{L}^{q})^{n_2}\ldots (\mathrm{R} \mathrm{L}^{q})^{n_j}\mathrm{S}_j
\end{equation}
with   \ 
$(\mathrm{RL}^q)^{n_{j+1}}\neq (\mathrm{RL}^q) ^{n_{k+j+1}}
$ and $\beta((\mathrm{R} \mathrm{L}^{q})^{n_{j+1}})$ even (odd). If 
 ${n_{k+j+1}>n_{j+1}}$ and  ${n_{j+1}}$  odd (even) or \,  ${n_{k+j+1}<n_{j+1}}$ and  ${n_{k+j+1}}$ even (odd)  then  $\lambda_{\sigma^n(\mathrm{P})}<\lambda_{\mathrm{P}}$ where $\sigma^n(\mathrm{P})= \mathrm{RL}^q \mathrm{S}_{k+1}\ldots \mathrm{S}_r\mathrm{C}$.

\end{thm}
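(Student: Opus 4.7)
My plan follows the parallel-comparison template of Theorem \ref{t4}: write $\lambda_\mathrm{P}$ and $\lambda_{\sigma^n(\mathrm{P})}$ one above the other, locate the first index at which they disagree, and use the parity hypotheses to show that $\sigma^n(\mathrm{P})$ carries the smaller value there.

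First, I align the common prefix. By the repetition hypothesis, the strings $\mathrm{RL}^q\mathrm{S}_1(\mathrm{RL}^q)^{n_2}\cdots(\mathrm{RL}^q)^{n_j}\mathrm{S}_j$ (prefix of $\mathrm{P}$) and $\mathrm{RL}^q\mathrm{S}_{k+1}(\mathrm{RL}^q)^{n_{k+2}}\cdots(\mathrm{RL}^q)^{n_{k+j}}\mathrm{S}_{k+j}$ (prefix of $\sigma^n(\mathrm{P})$) coincide as symbol strings. The number of preceding $\mathrm{R}$'s at each position of this common prefix is therefore the same in both sequences, so the corresponding $\lambda$-values match, and the first disagreement must lie inside $(\mathrm{RL}^q)^{n_{j+1}}$ of $\mathrm{P}$ versus $(\mathrm{RL}^q)^{n_{k+j+1}}$ of $\sigma^n(\mathrm{P})$.

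Second, I exploit the block structure. Each $\mathrm{RL}^q$ contributes a homogeneous $\pm 1_{q+1}$ to the $\lambda$-sequence and consecutive blocks flip sign (one extra $\mathrm{R}$ per block), so the first $m:=\min(n_{j+1},n_{k+j+1})$ blocks coincide and the disagreement is forced where the shorter run terminates. Fix $\beta((\mathrm{RL}^q)^{n_{j+1}})$ even (the odd case is symmetric). If $n_{k+j+1}>n_{j+1}$ with $n_{j+1}$ odd, the $n_{j+1}$-th common block is $1_{q+1}$; then $\sigma^n(\mathrm{P})$ continues with $-1_{q+1}$, while $\mathrm{P}$ enters $\mathrm{S}_{j+1}$ whose leading $\mathrm{R}$ (at flipped $\beta$) contributes $-1$, followed by at most $q-1$ $\mathrm{L}$'s all giving $-1$, and the next $\mathrm{R}$---inside $\mathrm{S}_{j+1}$ or at the start of $(\mathrm{RL}^q)^{n_{j+2}}$---contributes $+1$ within $q+1$ positions. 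The first disagreement thus shows $+1$ in $\mathrm{P}$ against $-1$ in $\sigma^n(\mathrm{P})$. If instead $n_{k+j+1}<n_{j+1}$ with $n_{k+j+1}$ even, the roles exchange symmetrically: $\mathrm{P}$ continues with $1_{q+1}$ while $\sigma^n(\mathrm{P})$ enters $\mathrm{S}_{k+j+1}$, which flips from $+1$ to $-1$ within $q+1$ positions, again giving $\lambda_{\sigma^n(\mathrm{P})}<\lambda_\mathrm{P}$.

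The main obstacle is keeping the sign bookkeeping consistent across the four subcases (two parities of $\beta$, two choices of which run is shorter) and checking that the flip on the shorter-block side occurs in time. The latter is automatic: the bound of at most $q-1$ consecutive $\mathrm{L}$'s in any $\mathrm{S}(m,q-1)$-block guarantees that the second $\mathrm{R}$ after the end of the shorter run appears within $q+1$ positions, and that $\mathrm{R}$ carries the sign opposite to the alternating $\pm 1_{q+1}$ block on the longer side. Once the parity tables are written out, the argument is a routine sign check with the same structure as in Theorem \ref{t4}, now carried one block further down the $\lambda$-sequence.
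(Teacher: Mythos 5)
Your proposal is correct and follows essentially the same route as the paper: align the common prefix, observe that the first $\min(n_{j+1},n_{k+j+1})$ alternating $\pm 1_{q+1}$ blocks still agree, and use the parity hypotheses to show that at the point where the shorter run ends the $\mathrm{S}$-block side flips sign within at most $q+1$ positions while the longer run does not, forcing $\lambda_{\sigma^n(\mathrm{P})}<\lambda_{\mathrm{P}}$. You are in fact slightly more explicit than the paper about where the first genuine disagreement occurs (the $+1$ after the $-1_h$ run), which is a useful clarification rather than a deviation.
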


\begin{proof}
a) $\beta((\mathrm{R} \mathrm{L}^{q})^{n_{j+1}})$ even  

Let $\mathrm{\sigma}^n (P)=\mathrm{R} \mathrm{L}^{q}\mathrm{S_{k+1}}\ldots$, with $2 \, \leq k+1 \, \leq r$.  If  ${n_{j+1}}$ is odd it results 
  $$
  \begin{array}{l}
  \lambda _{(\mathrm{R} \mathrm{L}^{q})^{n_{j+1}}} =
  ( 1_{q+1},   -1_{q+1} \ldots 1_{q+1})
  \end{array}
  $$
 In a  similar way we get  $\lambda_{(\mathrm{R} \mathrm{L}^{q})^{n_{j+1}}\mathrm{S}_{j+1}}=( 1_{q+1},   -1_{q+1} \ldots 1_{q+1}, -1_{h}, \ldots) $  with  $h< q+1$ 
 where the $-1_h$ comes from the $S_{j+1}$.
As ${n_{k+j+1}>n_{j+1}}$  and $\beta(\mathrm{R} \mathrm{L}^{q})^{n_{j+1}}$ is even   it follows that

 $$\begin{array}{r|l}
  \lambda_P= (1_{q+1}, - \lambda_{\mathrm{S}_1(\mathrm{R} \mathrm{L}^{q})^{n_2}\cdots\mathrm{S}_j}, \underbrace{1_{q+1} -1_{q+1}\cdots 1_{q+1}}_{\substack{ n_{j+1}}} & \quad -1_h, \ldots\ldots\ldots) \\
         \mbox{} {\scriptsize \textrm{(by hypothesis of the theorem)}} \hspace{0.5 cm} {\Huge \mid \, \mid}  \hspace{3cm} \mbox{}     &\ {\mathbf{\vee}}  \\
   \sigma^n(\mathrm{P})=(1_{q+1}, - \lambda_{\mathrm{S_{k+1}}(\mathrm{RL}^q)^{n_{k+2}}\cdots\mathrm{S_{k+j}}}, \overbrace{1_{q+1} -1_{q+1}\cdots 1_{q+1}}^{\substack{ n_{j+1}}} & \overbrace{-1_{q+1}, \ldots}^{\substack{(n_{k+j+1}-n_{j+1})}}\ldots\ldots)
      \\ 
   \end{array} 
$$

$$
\Longrightarrow \lambda_{\sigma^n(\mathrm{P})}< \lambda_P
$$

\medskip

 If  ${n_{k+j+1}<n_{j+1}}$ and  ${n_{k+j+1}}$ is even, the proof is done in a similar way.
 
 b) $\beta((\mathrm{R} \mathrm{L}^{q})^{n_{j+1}})$ odd.  
 The proof is straightforwardly adapted from the one given in case (a).
 
 \end{proof}

Theorem \ref{t4}   controls the only shifts we have to pay attention to (see Remark \ref{r3}) in order to Theorem \ref{t1} be satisfied  when the common chunks of $\mathrm{P}$ are followed by different $\mathrm{S}_k$ groups, whereas Theorem \ref{t5} controls the only shifts we have to pay attention to (see Remark \ref{r3}) in order to Theorem \ref{t1} be satisfied  when the common chunks of $\mathrm{P}$ are followed by different $\mathrm{RL}^q$ groups.
As {admissible} sequences $\mathrm{P}=\mathrm{R} \mathrm{L}^{q} \mathrm{S}_1$ $(\mathrm{R} \mathrm{L}^{q})^{n_2}\mathrm{S}_2 (\mathrm{R} \mathrm{L}^{q})^{n_3}\mathrm{S}_3 \cdots\mathrm{S}_k (\mathrm{R} \mathrm{L}^{q})^{n_{k+1}} \mathrm{S}_{k+1} \cdots (\mathrm{R} \mathrm{L}^{q})^{n_r}\mathrm{S}_r \mathrm{C} $ result from linking $\mathrm{RL}^q$ and $\mathrm{S}_k$ groups, the only shifts that should be done in order to check whether $\sigma ^n (\mathrm{P})$ satisfies Theorem \ref{t1} are those given by Theorems \ref{t4} and \ref{t5}.
If, in addition, when  making all possible shifts Theorem \ref{t1} is verified, according to Theorems \ref{t4} and \ref{t5} it results that $\mathrm{P}$ is { MSS-sequence} and so we have the following theorem.

\begin{thm}{\label{5}}  
$\mathrm{P}=\mathrm{R} \mathrm{L}^{q}\, \mathrm{S_1}$ $(\mathrm{R} \mathrm{L}^{q})^{n_2}\mathrm{S}_2  \cdots (\mathrm{R} \mathrm{L}^{q})^{n_r}\mathrm{S}_r \mathrm{C}
$   {are MSS-sequences} if and only if for each $k$,  $1\leq k<r$, such that $\sigma^n(\mathrm{P})= \mathrm{RL}^q \mathrm{S}_{k+1}\ldots \mathrm{S}_r\mathrm{C},$  either

a) $$
\mathrm{S}_{k+1}(\mathrm{R} \mathrm{L}^{q})^{n_{k+2}}\mathrm{S}_{k+2}\ldots (\mathrm{R} \mathrm{L}^{q})^{n_{k+j}}
=\mathrm{S}_1(\mathrm{R} \mathrm{L}^{q})^{n_2}\mathrm{S}_2\ldots (\mathrm{R} \mathrm{L}^{q})^{n_j},  \ 
\mathrm{S}_j\neq \mathrm{S}_{j+k}
$$
and
$(-1)^{\beta (\mathrm{S}_j)} \lambda _{ \sigma ^n (\mathrm{P})}< (-1)^{\beta (\mathrm{S}_j)} \lambda _\mathrm{P}$ 
or 
\medskip

b)
\begin{multline*}
\mathrm{S}_{k+1}(\mathrm{R} \mathrm{L}^{q})^{n_{k+2}}\mathrm{S}_{k+2}\ldots (\mathrm{R} \mathrm{L}^{q})^{n_{k+j}}\mathrm{S}_{k+j} \\=\mathrm{S}_1(\mathrm{R} \mathrm{L}^{q})^{n_2}\mathrm{S}_2\ldots (\mathrm{R} \mathrm{L}^{q})^{n_j}\mathrm{S}_j,  \ 
(\mathrm{RL}^q)^{n_{j+1}}\neq (\mathrm{RL}^q) ^{n_{k+j+1}}
\end{multline*}
and 

$\beta((\mathrm{R} \mathrm{L}^{q})^{n_{j+1}})$ even (odd) with either 
\ ${n_{k+j+1}>n_{j+1}}$ ,   ${n_{j+1}}$  odd (even) or \,  ${n_{k+j+1}<n_{j+1}}$,  ${n_{k+j+1}}$ even (odd) 
\end{thm}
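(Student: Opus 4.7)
The plan is to combine four ingredients already established in the excerpt: Remark \ref{r3}, which confines the shift-maximality check to shifts of the form $\sigma^n(\mathrm{P})=\mathrm{RL}^q\,\mathrm{S}_{k+1}\cdots \mathrm{S}_r\mathrm{C}$; Theorems \ref{t4} and \ref{t5}, which supply the required inequalities for the two possible ``first disagreements'' between $\lambda_{\mathrm{P}}$ and $\lambda_{\sigma^n(\mathrm{P})}$; and the BMS criterion (Theorem \ref{t1}), which together with Theorem \ref{t1s} identifies shift-maximality with being an MSS-sequence for the class of unimodal maps at hand.

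\textbf{Sufficiency.} Assume that for every $k$ with $1\le k<r$ the shift $\sigma^n(\mathrm{P})=\mathrm{RL}^q\,\mathrm{S}_{k+1}\cdots \mathrm{S}_r\mathrm{C}$ satisfies (a) or (b). Fix such a $k$ and align $\lambda_{\mathrm{P}}$ with $\lambda_{\sigma^n(\mathrm{P})}$: both start with $1_{q+1}$ and are built from alternating blocks of $(\mathrm{RL}^q)^{n_i}$-type and $\mathrm{S}_i$-type, so the first place where they can disagree must be either at a pair $\mathrm{S}_j,\mathrm{S}_{k+j}$ (the setup of Theorem \ref{t4}, matching (a)) or at a pair $(\mathrm{RL}^q)^{n_{j+1}},(\mathrm{RL}^q)^{n_{k+j+1}}$ (the setup of Theorem \ref{t5}, matching (b)). In either case the respective theorem yields $\lambda_{\sigma^n(\mathrm{P})}<\lambda_{\mathrm{P}}$, while Remark \ref{r3} handles all remaining shifts. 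Hence $\mathrm{P}$ is shift-maximal, and Theorem \ref{t1} upgrades this to $\mathrm{P}$ being an MSS-sequence.

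\textbf{Necessity.} Conversely, suppose $\mathrm{P}$ is an MSS-sequence. By Theorem \ref{t1s} it is shift-maximal, so $\lambda_{\sigma^n(\mathrm{P})}<\lambda_{\mathrm{P}}$ for every relevant $k$. Compare the two $\lambda$-sequences entry by entry. The block structure of $\mathrm{P}$ forces the first disagreement to lie on a block boundary: either the matched prefix ends with $(\mathrm{RL}^q)^{n_j}$ and the next $\mathrm{S}$-blocks differ, which yields the prefix identity of (a) and, after factoring out the common sign $(-1)^{\beta(\mathrm{S}_j)}$ inherited from that prefix, the signed inequality of (a); or the matched prefix ends with $\mathrm{S}_j=\mathrm{S}_{k+j}$ and the following blocks $(\mathrm{RL}^q)^{n_{j+1}}$ and $(\mathrm{RL}^q)^{n_{k+j+1}}$ differ in exponent, which — using that successive $\mathrm{RL}^q$ blocks contribute $\pm 1_{q+1}$ alternately, modulated by $(-1)^{\beta((\mathrm{RL}^q)^{n_{j+1}})}$ — forces precisely the parity/ordering conditions of (b). Since these two subcases exhaust all possibilities for the location of the first disagreement, one of (a), (b) must hold at each $k$.

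The main obstacle is the sign bookkeeping in case (b): one must carefully track how the global parity $(-1)^{\beta((\mathrm{RL}^q)^{n_{j+1}})}$ carried from the common prefix interacts with the local $\pm 1_{q+1}$ alternation of consecutive $\mathrm{RL}^q$ blocks, and extract from a single entry-wise inequality the correct pairing of ``$n_{k+j+1}$ greater/less than $n_{j+1}$'' with ``$n_{j+1}$ odd/even (respectively $n_{k+j+1}$ even/odd)''. Once this is done, the exhaustiveness of cases (a)–(b) and the direct appeal to Theorems \ref{t4}, \ref{t5}, \ref{t1}, \ref{t1s}, and Remark \ref{r3} yield the biconditional.
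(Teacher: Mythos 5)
Your proposal is correct and follows essentially the same route as the paper: the paper justifies Theorem \ref{5} precisely by the paragraph preceding it, which combines Remark \ref{r3} (only shifts of the form $\sigma^n(\mathrm{P})=\mathrm{RL}^q\,\mathrm{S}_{k+1}\cdots\mathrm{S}_r\mathrm{C}$ need checking) with Theorems \ref{t4} and \ref{t5} (the two possible locations of the first disagreement, an $\mathrm{S}$-block or an $(\mathrm{RL}^q)^{n}$-block) and the BMS equivalence of shift-maximality and MSS-membership from Theorems \ref{t1} and \ref{t1s}. Your treatment of the necessity direction and the sign bookkeeping is, if anything, slightly more explicit than the paper's.
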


\section{Construction of blocks  $\mathbf{S(m_i,q-1)}$}

Given a sequence $\mathrm{P}$ of period $p$, 
Theorem  \ref{t4} (see part (a) of its proof) indicates that $\mathrm{S_i}  , \ i>1$ is determined by $\mathrm{S_1 RL}^q$ in order to have an MSS-sequence. 
So, we have to calculate   $\mathrm{S_1}$ as $\mathrm{S}(m,q-1)$ and, after that,  $\mathrm{S}_i ,  i>1.$

\smallskip

a) {  Construction of  $\mathrm{S}(m_i,q-1)$ with  $i=1$, $m_i=m_1$.}

\smallskip

We begin providing an algorithm to construct $\mathrm{S}(m,q-1).$  
Notice that its construction is equivalent to solving the problem of filling a row of $m$ boxes each of them with one letter 
$\mathrm{R}$  or  $\mathrm{L}$ 
in such a way that the row always starts with  $\mathrm{R}$  and has at most  $q-1$ consecutive $\mathrm{L}$s.
To get it we write  ${m=j q+r}, \ 0 \leq r<q.$ 
That is, we group the boxes in blocks  $\mathrm{B}_i, \ i=1, \ldots, j,$  of $q$ consecutive boxes each,
where each block must have at least one $\mathrm{R}$ 
but, perhaps, for the last block
 $\mathrm{F}$, which contains the last  $r$ boxes. So

 $$\mathrm{S}(m,q-1)=\mathrm{A}_1 \mathrm{A}_2 \ldots \mathrm{A}_m=\mathrm{B}_1 \mathrm{B}_2 \ldots \mathrm{B}_j \mathrm{F}$$

 \begin{center}
   ${\mathrm{S}(m,q-1)}=\stackrel{\mathrm{B}_1}{\text{\framebox[1.8cm]{${\mathrm{A}_{1} \ldots \mathrm{A}_{q}}$}}} \quad \stackrel{\mathrm{B}_2}{\text{\framebox[2.2cm]{${\mathrm{A}_{q+1} \ldots \mathrm{A}_{2q}}$}}}\ \ldots \ \stackrel{\mathrm{B}_j}{\text{\framebox[3cm]{${\mathrm{A}_{(j-1)q+1} \ldots \mathrm{A}_{jq}}$}}}\quad \stackrel{\mathrm{F}}{\text{\framebox[2.7cm]{${\mathrm{A}_{jq+1} \ldots \mathrm{A}_{jq+r}}$}}}$
\end{center}

We distinguish the following cases:

\begin{itemize}
\item [i)] \  $j=0$, i.e.
   $\stackrel{\mathrm{F}}{\text{\framebox[1.8cm]{${\mathrm{A}_{1} \ldots \mathrm{A}_{r}}$}}}$

 Since every sequence ${\mathrm{S}(m,q-1)}$ 
 is always preceded by a block   $\mathrm{RL}^q$, in order to avoid a sequence  $\mathrm{L}^{q+1}$ 
 the first symbol in  $\mathrm{F}$  must be $\mathrm{R}$, followed by a sequence of  $\mathrm{R}$s and  $\mathrm{L}$s with length   $r-1<q.$ That is, an $\mathrm{R}$  followed by the variations with repetition of  $\mathrm{L}$s and   $\mathrm{R}$s of length $r-1.$

\item [ii)] $j\neq 0.$ 
We denote by $f_i$ 
the position of the first
$\mathrm{R}$  in block $\mathrm{B_i}$ and by  $l_i$ the position of the last $\mathrm{R}$ in that block. Notice that  $f_1=1$ by definition of $\mathrm{S}(m,q-1)$ and that $f_i$ 
has not to be $1$ in the other blocks. 
Thus, the sequences of consecutive blocks
$\mathrm{B}_i$ and  $\mathrm{B}_{i+1}$ 
have the following structure with $i\neq 1$ (have in mind that before the first  $ \mathrm{R}$  
and after the last
$ \mathrm{R}$  
it is only possible to have 
 $\mathrm{L}$s) 

$$\overbrace{
\begin{array}{rcccl}
& f_i& &l_i & \\
 \overset{\mathrm{L}s}{\overbrace{\hspace*{1.cm}}}&{\text{\framebox[0.5cm]{$\mathrm{R}$}}}& &{\text{\framebox[0.5cm]{$\mathrm{R}$}}}& \overset{\mathrm{L}s}{\overbrace{\hspace*{1.cm}}}\\ \hline 
 & &\underset{{l_i-f_i-1}}{\underbrace{\hspace*{1.cm}}}& & \\
\end{array}}^{\mathrm{B}_i} \qquad 
\overbrace{
\begin{array}{rcccl}
   & f_{i+1}& & l_{i+1}\\
 \overset{\mathrm{L}s}{\overbrace{\hspace*{1.cm}}}&{\text{\framebox[0.5cm]{$\mathrm{R}$}}}& &{\text{\framebox[0.5cm]{$\mathrm{R}$}}}& \overset{\mathrm{L}s}{\overbrace{\hspace*{1.cm}}}\\ \hline 
 & &\underset{{l_{i+1}-f_{i+1}-1}}{\underbrace{\hspace*{1.5cm}}}& & \\
\end{array}}^{\mathrm{B}_{i+1}}
$$

with $l_i= f_i,\ldots,q$, and  $f_{i+1}=1, \ldots, l_i$.
Thus the number of consecutive $\mathrm{Ls}$ between the last   $\mathrm{R}$ of $\mathrm{B}_i$ and the first of $\mathrm{B}_{i+1} $ is, at most,  $q-1$. So the sequences in the blocks follow the pattern $\mathrm{L}^{f_i-1}\mathrm{R} \ (VR_{(2,l_i-f_i-1)}) \mathrm{R L}^{q-l_i}$, where  ${ (VR_{(2,l_i-f_i-1)}})$ denote the set of variations with repetition of $\mathrm{L}$s  and $\mathrm{R}$s with length $l_i - f_i-1$.
Notice that
$l_i-f_i-1$ can be 0; in that case the first and last  $\mathrm{R}$ in $\mathrm{B_i}$ coincide, i.e. the block $\mathrm{B}_i$ contains only one $\mathrm{R}$, so  ${VR_{(2,l_i-f_i-1)}}=\emptyset$  and $\mathrm{B}_i=\mathrm{L}^{f_i-1}\mathrm{R L}^{q-l_i}$. 

It remains to construct the block $\mathrm{F}.$ 
To do so we consider ${l_j}$,   
the position of the last $\mathrm{R}$ in the block  $\mathrm{B}_j$  previous to $\mathrm{F}.$

\begin{itemize}
\item [a)]  $l_j< r+1.$ The block $\mathrm{F}$
behaves as blocks
 $\mathrm{B}_i$ with $f_\mathrm{F}=1, \ldots, l_j$ and $l_\mathrm{F}=f_\mathrm{F}, \ldots, r.$
\item [b)]  $l_j\geq r+1$.  If block $\mathrm{F}$ 
were composed only with $r$ consecutive $\mathrm{L}$s  -the most unfavorable case-
then the number of consecutive  $\mathrm{L}$s from the last  $\mathrm{R}$ in  $\mathrm{B}_j$  would be, at most,  $q-l_j+r\leq q-(r+1)+r=q-1.$ Thus in that case
$\mathrm{F}$
will be the sequences formed by variations with repetition of
$\mathrm{R} $s and $\mathrm{L} $s with length $r$, i.e.  ${VR}_{(2,r)}.$
\end{itemize}
\end{itemize}

b) \ Construction of $\mathrm{S}_i=\mathrm{S}(m_i,q-1)$ with  $i\neq 1.$

Let us construct   $\mathrm{S}_i=\mathrm{S}(m_i,q-1)$  so that sequences $\mathrm{P} $ are { shift maximal.} 
As $\lambda _\mathrm{P} =(1_{q+1}, -\lambda _{\mathrm{S}_1}, \dots, (-1) ^{\beta (\mathrm{S}_i)} \lambda _{\mathrm{S}_i} \dots )$ and 
$\lambda _{\sigma ^n (\mathrm{P})} =(1_{q+1}, -\lambda _{\mathrm{S}_i}, \dots )$,
according to Theorem \ref{t4},
we must construct $\mathrm{S}_i$ such that $-\lambda_{\mathrm{S}_i}<-\lambda_{\mathrm{S}_1 \mathrm{RL^q}}$ in order to satisfy Theorem 1. In other words, we want $\lambda_{\mathrm{S}_1 \mathrm{RL^q}}<\lambda_{\mathrm{S}_i}.$ To get it we look for the positions in $\lambda_{\mathrm{S}_1\mathrm{RL^q}}$ where $-1$ appears and we substitute that $-1$ with $1$, such that the number of consecutive $1$s will be less or equal than  $q.$  

 Let be 
 $\lambda_{\mathrm{S}_1\mathrm{RL}^q}=(a_1,  \ldots,  a_{j}  \ldots {a_{m_1}} ,   (-1)^{\beta(\mathrm{RL}^q)}
1_{q+1})$.
 Then, with the aim of constructing $\mathrm{S}_i$ :
 
$ 1)$ Let \   $a_j$ be the first  $-1$ in $\lambda_{\mathrm{ S}_1 \mathrm{RL}^q}$  (notice that  $j>1$) which is preceded by a sequence $1_k$,  $0\leq k \leq q-1$.

$2)$ We construct $\lambda_{\mathrm{ S}_i}=( a_1, \ldots, a_{j-1}, 1, \lambda_{\mathrm{Q}}),$ where $\mathrm{Q}$  is
a sequence consisting on $\mathrm{R}$s  and $\mathrm{L}$s that has $q-1$ consecutive $\mathrm{L}$s at most whose length $l$ can vary from $0$ to a value $M$,  chosen such that the period $p$ of a sequence $\mathrm{P}$ is not beaten.
In order to avoid sequences $1_{q+1}$, after replacing $a_j$ by $1$, we have to reject those $\lambda_Q$ starting with sequences $1_n$ such that $1_k \, 1 \, 1_n = 1_h$, $h>q$.

$3)$ While $j\leq m_1+q$  we look for the next $a_j=-1$ ,  preceded by a sequence $1_k$,  $0\leq k \leq q-1$, and go back to step $2$.

Notice that if a $1_q$ is generated in the process then it will have the associate sequence $\mathrm{RL}^{q-1}$, i.e, letter $\mathrm{R}$, not only letter $\mathrm{L}$,  plays a role when sequences of $1$s are generated.

 \section{Structure of  non-primary MSS-sequences }
An important topic in dynamical systems is the  composition of sequences \cite{De}, which carries the inverse problem of knowing whether an sequence is primitive or not.

 We begin remembering  the composition law formulated by B. Derrida, A. Gervois and  Y. Pomeau.

\begin{defi} \cite{De} Let be $\mathrm{O_s=y_1\ldots y_{s-1} C}$ and $\mathrm{O_h=x_1\ldots x_{h-1} C}$, with $\mathrm{y_i, \ x_j}$ either $\mathrm{R}$ or   $\mathrm{L}$.

$\mathrm{O_h}* \mathrm{O_s}={{{\mathrm{Q} \   \mathrm{y_1} }}\ \mathrm{Q} \ \mathrm{y_2}\ldots, \mathrm{y_{s-1} \ Q }} \ \mathrm{C}$,
with $\mathrm{Q}=\mathrm{x_1\ldots x_{h-1}}$, 
if  $\mathrm{R}-$parity of  $\mathrm{O_h}$ is even 
and
$\mathrm{O_h}* \mathrm{O_s}={\mathrm{Q \ \overline{\mathrm{y_1}}} \   \mathrm{Q \, \overline{\mathrm{y_2}}} \ \ldots \mathrm{Q \,  \overline{\mathrm{y} }_{s-1} Q}}\,  \mathrm{C}$  with  ${\overline{\mathrm{y_i}}} \neq \mathrm{y_i}$, otherwise. Where $\overline{\mathrm{R}}={\mathrm{L}}$ and $\overline{\mathrm{L}}={\mathrm{R}}$
\end{defi}

\begin{Remark} 
This composition law is not restricted to the class of unimodal maps given in theorem \ref{t2}, which has been used to construct the explicit form of the MSS-sequences.
\end{Remark}
For recursive decomposition reasons, it is necessary to know the decomposition of the MSS-sequences given by 
   Theorem \ref{t2}  ($\mathrm{RL}^q \mathrm{S}(m,q-1)\mathrm{C}$).

\begin{thm}{\label{T:descomposicion1}} Given  $\mathrm{P}=\mathrm{RL}^q \mathrm{S}(m,q-1)\mathrm{C}$ a MSS sequence of length $p$,
the structure of non-primary sequences falls exclusively within the  pattern $$\mathrm{RL}^{q} \,  \mathrm{RL}^{(q-1)} \, (\mathrm{R}^2\mathrm{L}^{(q-1)})^{r} \mathrm{C},$$ {$p=(r+2)(q+1).$}
Furthermore the non-primary sequences can be written as
\begin{equation}
\mathrm{RL}^{q} \,  \mathrm{RL}^{(q-1)} \, (\mathrm{R}^2\mathrm{L}^{(q-1)})^{r} \mathrm{C} \\ \notag 
=  \mathrm{RL}^{q-1}\mathrm{C} * \mathrm{RL}^{\frac{p}{q+1}-2}\mathrm{C} \label{ec_compo}
\end{equation}
where  $q+1$ divides $p,$ \  {$q+1 \ne 1,  p.$ }
\end{thm}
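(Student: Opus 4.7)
The plan is to assume $\mathrm{P}=\mathrm{RL}^q\mathrm{S}(m,q-1)\mathrm{C}$ is non-primary and derive the unique shape of the two factors in its $*$-decomposition, then verify the explicit form by direct concatenation. Suppose $\mathrm{P}=\mathrm{O}_h*\mathrm{O}_s$ with $1<h,s<p$ and $p=hs$. Write $\mathrm{Q}$ for $\mathrm{O}_h$ with its terminal $\mathrm{C}$ removed; then $\mathrm{Q}$ has length $h-1$ and the composition rule places $s$ consecutive copies of $\mathrm{Q}$ at positions $(i-1)h+1,\ldots,ih-1$ of $\mathrm{P}$, separated by single interleaved letters (which are $\overline{y_i}$ or $y_i$ depending on the $\mathrm{R}$-parity of $\mathrm{O}_h$).

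The most delicate step is to prove $\mathrm{Q}=\mathrm{RL}^{q-1}$. As an initial segment of $\mathrm{P}$, $\mathrm{Q}$ equals $\mathrm{RL}^{h-2}$ when $h\le q+2$, while for $h\ge q+3$ it already begins with $\mathrm{RL}^q\mathrm{R}$. In the latter case, and also for $h=q+2$, the second copy of $\mathrm{Q}$ sits entirely inside the tail $\mathrm{S}(m,q-1)\mathrm{C}$ and drags along a forbidden $\mathrm{L}^q$ sub-block, contradicting Definition \ref{dS}. Hence $h\le q+1$. If instead $h\le q$, the second copy of $\mathrm{Q}$ would begin at position $h+1\le q+1$ with the letter $\mathrm{R}$, whereas positions $2,\ldots,q+1$ of $\mathrm{P}$ are all $\mathrm{L}$. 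This forces $h=q+1$, so $\mathrm{Q}=\mathrm{RL}^{q-1}$, $s=p/(q+1)$, and $(q+1)\mid p$ with $q+1\ne 1,p$.

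The second step determines $\mathrm{O}_s=y_1 y_2\cdots y_{s-1}\mathrm{C}$. Since $\mathrm{O}_h=\mathrm{RL}^{q-1}\mathrm{C}$ has odd $\mathrm{R}$-parity, the composition law gives $\mathrm{P}=\mathrm{Q}\,\overline{y_1}\,\mathrm{Q}\,\overline{y_2}\cdots\mathrm{Q}\,\overline{y_{s-1}}\,\mathrm{Q}\,\mathrm{C}$, with each interleaved letter $\overline{y_i}$ sitting at position $ih$ of $\mathrm{P}$. The first such letter, at position $q+1$, is $\mathrm{L}$, so $y_1=\mathrm{R}$. For $i\ge 2$ the junction $\mathrm{Q}\,\overline{y_i}\,\mathrm{Q}=\mathrm{RL}^{q-1}\,\overline{y_i}\,\mathrm{RL}^{q-1}$ lies inside $\mathrm{S}(m,q-1)$, so $\overline{y_i}=\mathrm{L}$ would again produce an $\mathrm{L}^q$ block there, which is forbidden; hence $y_i=\mathrm{L}$. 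Therefore $\mathrm{O}_s=\mathrm{RL}^{s-2}\mathrm{C}=\mathrm{RL}^{p/(q+1)-2}\mathrm{C}$.

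Concatenating with these choices yields $\mathrm{P}=\mathrm{RL}^{q-1}\mathrm{L}(\mathrm{RL}^{q-1}\mathrm{R})^{s-2}\mathrm{RL}^{q-1}\mathrm{C}=\mathrm{RL}^q\,\mathrm{RL}^{q-1}(\mathrm{R}^2\mathrm{L}^{q-1})^{r}\mathrm{C}$ with $r=s-2=p/(q+1)-2$, which establishes both the structural claim and the decomposition formula simultaneously, together with $p=(r+2)(q+1)$. The main obstacle is the case analysis in step one: the interplay between $\mathrm{Q}$ being a prefix of $\mathrm{P}$ that must be repeated $s$ times and the ``at most $q-1$ consecutive $\mathrm{L}$s'' restriction on $\mathrm{S}(m,q-1)$ must be used to eliminate every value of $h$ except $q+1$; once $h$ is pinned down, steps two and three reduce to a single further application of the same $\mathrm{L}^q$-exclusion principle and a bookkeeping check.
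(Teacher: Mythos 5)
Your proof is correct and follows essentially the same route as the paper's: pin down $\mathrm{O_h}=\mathrm{RL}^{q-1}\mathrm{C}$ from the leading block $\mathrm{RL}^{q}$, then force $\overline{\mathrm{y}_1}=\mathrm{L}$ and $\overline{\mathrm{y}_i}=\mathrm{R}$ for $i\ge 2$ via the ban on $\mathrm{L}^{q}$ inside $\mathrm{S}(m,q-1)$, and read off the explicit pattern. If anything, your elimination of all candidate lengths $h\ne q+1$ is more explicit than the paper's, which simply asserts that only $\mathrm{O_h}=\mathrm{RL}^{q}\mathrm{C}$ or $\mathrm{O_h}=\mathrm{RL}^{q-1}\mathrm{C}$ could generate the initial block before discarding the former.
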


\begin{proof}
Let be $\mathrm{RL}^q \mathrm{S}(m,q-1)\mathrm{C}=\mathrm{O_h}* \mathrm{O_s}$. As  $\mathrm{RL}^q \mathrm{S}(m,q-1)\mathrm{C}$ begins with   $\mathrm{RL}^q $ the only way to generate the block $\mathrm{RL}^q $ by the composition is  that either   $\mathrm{O_h}=\mathrm{RL}^q\mathrm{C}$ or   $\mathrm{O_h}=\mathrm{RL}^{q-1}\mathrm{C}$.  If  $\mathrm{O_h}= \mathrm{RL}^q\mathrm{C}$ then  the block $\mathrm{RL}^q $ would appear more than once in the non-primary sequence,  and this  would be in contradiction with the  structure of    $\mathrm{RL}^q \mathrm{S}(m,q-1) \mathrm{C}$.

 If we compose $\mathrm{O_h}=\mathrm{RL}^{q-1}\mathrm{C}$ and   $\mathrm{O_s=y_1,\ldots, y_{s-1} C}$ (where $\mathrm{y_1}=\mathrm{R}$ and $\mathrm{y_2}=\mathrm{L}$ because $\mathrm{O_s}$ is a MSS-sequence)  as the  $\mathrm{R}$-parity of $\mathrm{O_h}$ is odd, it results    
$$
\mathrm{O_h}* \mathrm{O_s}=  \mathrm{RL}^{q-1} \, {\mathrm{\overline{y_1}}} \, \mathrm{RL}^{q-1}  \, {\mathrm{\overline{y_2}}}  \, \mathrm{RL}^{q-1}   \, {\mathrm{\overline{y_{3}}}}  \ldots \, {\mathrm{\overline{y_{s-1}}}}  \mathrm{RL}^{q-1}\mathrm{C}
$$
As $\mathrm{O_h}*\mathrm{O_s}$ begins with $\mathrm{RL}^q$   it follows that  ${\mathrm{\overline{y_{1}}}} $  has necessarily  to be turned into an  $\mathrm{L}$ and, in order to avoid to get more than one  $\mathrm{RL}^q$ block, the remaining  ${\mathrm{\overline{y_{i}}}}, i=2,\ldots s-1 $ have to become  $\mathrm{R}$s, so

\begin{multline*}\mathrm{O_h}* \mathrm{O_s}=  \mathrm{RL}^{q-1} \, \underbrace{{\mathrm{\overline{y_1}}} }_{\mathrm{L}} \, \mathrm{RL}^{q-1}  \, \underbrace{{\mathrm{\overline{y_{2}}}} }_{\mathrm{R}} \, \mathrm{RL}^{q-1} \underbrace{{\mathrm{\overline{y_{3}}}} }_{\mathrm{R}}  \,  \ldots \underbrace{{\mathrm{\overline{y_{s-1}}}} }_{\mathrm{R}}  \,  \mathrm{RL}^{q-1}\mathrm{C}=
\\
= \, \mathrm{RL}^{q} \, \mathrm{RL}^{q-1}  \,  \mathrm{R}^2\mathrm{L}^{q-1}  \, \,  \mathrm{R}^2\mathrm{L}^{q-1}  \ldots  \mathrm{R}^2\mathrm{L}^{q-1}{\mathrm{C}} \, 
= \, \mathrm{RL}^{q-1}\mathrm{C} * \mathrm{RL}^{p/(q+1)-2}\mathrm{C} ,
\end{multline*}
with,   $\mathrm{O_s}= \mathrm{RL}^{p/(q+1)-2}\mathrm{C.}$

We conclude that, by construction, there only exist the described $\mathrm{O_h}$ and $\mathrm{O_s}$ whose composition generates the sequence
$\mathrm{RL}^q \mathrm{S}(r,q-1)\mathrm{C}$. 
Moreover, we get that $q+1$ must be a proper divisor of $p.$ 
\end{proof}

 \medskip

Let us focus now on the sequences with a repeated group $\mathrm{RL}^{q}$.
From the composition of the sequences
 $\mathrm{O_h}=\mathrm{R} \mathrm{L}^{q}\, \mathrm{S}_1(\mathrm{R} \mathrm{L}^{q})^{n_2}\mathrm{S}_2   \cdots (\mathrm{R} \mathrm{L}^{q})^{n_r}\mathrm{S}_r \mathrm{C}
$ and $\mathrm{O_s}=\mathrm{y_1} \, \mathrm{y_2}\ldots 
\mathrm{y_{s-1}} \,\mathrm{C}$ it follows
\begin{multline}\label{ec6} \mathrm{O_h}* \mathrm{O_s}= \mathrm{R} \mathrm{L}^{q}\, \mathrm{S}_1(\mathrm{R} \mathrm{L}^{q})^{n_2}\mathrm{S}_2   \cdots (\mathrm{R} \mathrm{L}^{q})^{n_r}\mathrm{S}_r \ {\mathrm{\overline{y_1}}}\ \mathrm{R} \mathrm{L}^{q}\, \mathrm{S}_1(\mathrm{R} \mathrm{L}^{q})^{n_2}\mathrm{S}_2   \cdots (\mathrm{R} \mathrm{L}^{q})^{n_r}\mathrm{S}_r \ {{\mathrm{\overline{y_{2}}}} } \  \cdots \\
\cdots \mathrm{R} \mathrm{L}^{q}\, \mathrm{S}_1(\mathrm{R} \mathrm{L}^{q})^{n_2}\mathrm{S}_2   \cdots (\mathrm{R} \mathrm{L}^{q})^{n_r}\mathrm{S}_r \ {\mathrm{\overline{y_{s-1}}}} \ \mathrm{R} \mathrm{L}^{q}\, \mathrm{S}_1(\mathrm{R} \mathrm{L}^{q})^{n_2}\mathrm{S}_2   \cdots (\mathrm{R} \mathrm{L}^{q})^{n_r}\mathrm{S}_r \mathrm{C}
\end{multline}
This implies that, when a  MSS-sequence has repeated the subsequences that begin with $\mathrm{R} \mathrm{L}^{q}$ and have the last character different, it is a non-primary sequence, and it factors as described in (\ref{ec6}). 
This conclusion might lead to an error since there are 
 MSS-sequences in which the repeated subsequence is not evident. A particularly interesting case is shown in the next theorem, due to its importance and later use.}

\begin{thm}{\label{T:descomposicion}}
The  MSS-sequence  $\mathrm{P}=\mathrm{R} \mathrm{L}^{q}\, \mathrm{S_1}$ $\mathrm{R} \mathrm{L}^{q}\mathrm{S}_2  \cdots \mathrm{R} \mathrm{L}^{q}\mathrm{S}_r \mathrm{C}
$  is non-primary  if and only if  \, $\mathrm{S_1}=\mathrm{S_r}\, \mathrm{L}$, \ $\mathrm{S_2}=\mathrm{S_r} \, \mathrm{R}$  \,  and  \, $\mathrm{S_i}=\mathrm{S_r}\, \mathrm{z_i}$ \, where $\mathrm{z_i}=\mathrm{R}$  or  $\mathrm{ L}$ for all   $3\leq i\leq r-1$. 
 Moreover $\mathrm{S_r}$ does not finish with $\mathrm{R}\mathrm{L}^{q-1}$.
\end{thm}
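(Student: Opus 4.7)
The plan is to prove both directions via the DGP composition law, with the ``if'' direction being a direct construction and the ``only if'' an analysis of the form that $\mathrm{O}_h$ must take.

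For the ``only if'' direction, suppose $\mathrm{P}=\mathrm{O}_h*\mathrm{O}_s$ non-trivially with $\mathrm{O}_h=\mathrm{Q}\mathrm{C}$ and $\mathrm{O}_s=\mathrm{y}_1\cdots\mathrm{y}_{s-1}\mathrm{C}$. Since $\mathrm{O}_h$ is itself MSS it must start with $\mathrm{RL}^q$, so $\mathrm{Q}$ begins with $\mathrm{RL}^q$; and since the composition's final $\mathrm{Q}\mathrm{C}$ coincides with the tail of $\mathrm{P}$, the block $\mathrm{Q}$ must end with $\mathrm{S}_r$. I would first argue that $\mathrm{Q}$ contains exactly one occurrence of $\mathrm{RL}^q$. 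If it contained $k\geq 2$, the composition would force the list $\mathrm{S}_1,\ldots,\mathrm{S}_r$ to be periodic of period $k$ with $k-1$ consecutive $\mathrm{S}_i$'s literally equal to the inner blocks of $\mathrm{Q}$; this eventually collides with the shift-maximality constraints of Theorems~\ref{t4} and~\ref{5} (or with the assumed form of $\mathrm{P}$, in which every $\mathrm{RL}^q$ occurs with multiplicity one). Thus $\mathrm{Q}=\mathrm{RL}^q\mathrm{S}_r$, so $\mathrm{O}_h=\mathrm{RL}^q\mathrm{S}_r\mathrm{C}$.

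Reading off the DGP formula then gives $\mathrm{S}_i=\mathrm{S}_r\epsilon_i$ for $1\le i\le r-1$, where $\epsilon_i$ equals $\mathrm{y}_i$ or $\overline{\mathrm{y}_i}$ according to the $\mathrm{R}$-parity of $\mathrm{O}_h$. Since $\mathrm{O}_s$ is itself MSS it starts as $\mathrm{R}\mathrm{L}\cdots$, i.e.\ $\mathrm{y}_1=\mathrm{R}$, $\mathrm{y}_2=\mathrm{L}$; combined with the parity of $\mathrm{O}_h$, this yields $\mathrm{S}_1=\mathrm{S}_r\mathrm{L}$, $\mathrm{S}_2=\mathrm{S}_r\mathrm{R}$, and $\mathrm{S}_i=\mathrm{S}_r\mathrm{z}_i$ with $\mathrm{z}_i\in\{\mathrm{R},\mathrm{L}\}$ for $3\le i\le r-1$. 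The requirement $\mathrm{S}_1\in\mathrm{S}(m_1,q-1)$ then forces $\mathrm{S}_r$ not to end in $\mathrm{RL}^{q-1}$: otherwise $\mathrm{S}_1=\mathrm{S}_r\mathrm{L}$ would end in $\mathrm{RL}^q$ and would contain an extra $\mathrm{RL}^q$ block, contradicting the canonical decomposition into $\mathrm{RL}^q$ and $\mathrm{S}_i$ blocks.

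Conversely, assuming the stated form, I would set $\mathrm{O}_h:=\mathrm{RL}^q\mathrm{S}_r\mathrm{C}$, which is MSS by Theorem~\ref{t2} (using the hypothesis on $\mathrm{S}_r$ to guarantee that $\mathrm{S}_r\in\mathrm{S}(m_r,q-1)$), and $\mathrm{O}_s:=\mathrm{y}_1\cdots\mathrm{y}_{r-1}\mathrm{C}$ with $\mathrm{y}_i$ chosen so that the DGP formula inserts precisely $\mathrm{z}_i$ between consecutive copies of $\mathrm{Q}=\mathrm{RL}^q\mathrm{S}_r$; in particular $\mathrm{y}_1=\mathrm{R}$ and $\mathrm{y}_2=\mathrm{L}$, so $\mathrm{O}_s$ itself has the admissible MSS initial block and a direct shift-maximality verification (using the definitions of $\mathrm{z}_i$ for $i\ge 3$) completes its MSS status. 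Substituting in the DGP composition then gives $\mathrm{O}_h*\mathrm{O}_s=(\mathrm{RL}^q\mathrm{S}_r)\mathrm{z}_1(\mathrm{RL}^q\mathrm{S}_r)\mathrm{z}_2\cdots(\mathrm{RL}^q\mathrm{S}_r)\mathrm{C}=\mathrm{P}$, so $\mathrm{P}$ is non-primary. The main obstacle in this plan is the structural claim that $\mathrm{Q}$ contains exactly one occurrence of $\mathrm{RL}^q$: the DGP composition \emph{a priori} allows $\mathrm{Q}$ to contain any number of $\mathrm{RL}^q$ blocks, and ruling out $k\geq 2$ requires combining the hypothesis that all $n_i=1$ in $\mathrm{P}$ with the shift-maximality tests obtained in the previous section.
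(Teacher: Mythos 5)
Your overall route is the same as the paper's: both directions go through the DGP composition law with $\mathrm{O_h}=\mathrm{RL}^q\,\mathrm{S_r}\,\mathrm{C}$; the conditions $\mathrm{S}_1=\mathrm{S_r}\mathrm{L}$, $\mathrm{S}_2=\mathrm{S_r}\mathrm{R}$ come from $\mathrm{y}_1=\mathrm{R}$, $\mathrm{y}_2=\mathrm{L}$ combined with the $\mathrm{R}$-parity of $\mathrm{O_h}$; and the prohibition on $\mathrm{S_r}$ ending in $\mathrm{RL}^{q-1}$ comes from the same observation the paper uses, namely that $\mathrm{S}_i=\mathrm{S_r}\mathrm{L}$ would then splice with the following block to create an $(\mathrm{RL}^q)^2$ group, incompatible with the assumed form of $\mathrm{P}$. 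The ``if'' direction is likewise treated by both of you as a direct computation with the composition law.

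The genuine gap is the one you flag yourself, plus one you do not. First, your reduction to ``$\mathrm{Q}$ contains exactly one occurrence of $\mathrm{RL}^q$'' is only gestured at; the periodicity of the $\mathrm{S}_i$ that a multi-block $\mathrm{Q}$ would induce is not by itself a contradiction, and to be fair the paper does not close this step either --- it simply asserts, on the strength of the discussion around (\ref{ec6}), that the relevant factorization has $\mathrm{O_h}=\mathrm{RL}^q\mathrm{S_r}\mathrm{C}$. Second, and more seriously, your opening claim that ``since $\mathrm{O_h}$ is itself MSS it must start with $\mathrm{RL}^q$'' is false: $\mathrm{O_h}$ starts with $\mathrm{RL}^{q'}$ for its \emph{own} exponent $q'$, and the letters inserted by the composition can lengthen the initial run of $\mathrm{L}$s. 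Concretely, $\mathrm{O_h}=\mathrm{RL}^{q-1}\mathrm{C}$ composes to a sequence beginning with $\mathrm{RL}^{q}$ (this is exactly the situation of Theorem \ref{descompo}, case (iii) of Corollary \ref{fact}); for instance $\mathrm{RC}*\mathrm{RLRC}=\mathrm{RLRRRLRC}=\mathrm{RL}\,\mathrm{RR}\,\mathrm{RL}\,\mathrm{R}\,\mathrm{C}$ is a non-primary MSS-sequence with all $n_i=1$ and yet $\mathrm{S}_1\neq\mathrm{S}_r\mathrm{L}$. Hence the ``only if'' direction, as you argue it (and as the theorem literally reads), must be restricted to factorizations not already covered by Theorems \ref{T:descomposicion1} and \ref{descompo}, i.e.\ to case (ii) of Corollary \ref{fact}; the paper does this implicitly by presenting the three cases side by side, whereas your proof needs to rule out the other two forms of $\mathrm{O_h}$ explicitly before the claim ``$\mathrm{O_h}=\mathrm{RL}^q\mathrm{S_r}\mathrm{C}$'' can be read off.
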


\begin{proof}
From the  $*-$composition law it follows that $\mathrm{P}$ is a non-primary sequence if and only if
  \, $\mathrm{S_i}=\mathrm{S_r}\, \mathrm{z_i}$ where $z_i=\mathrm{R}$ \ or \ $\mathrm{L}$  for   $i=1, \ldots, r-1$. So 
\begin{multline}\label{ec9} \mathrm{P}= \mathrm{R} \mathrm{L}^{q}\, \underbrace{\mathrm{S_r}\mathrm{z_1}}_{\mathrm{S}_1}\ \mathrm{R} \mathrm{L}^{q}\, \underbrace{\mathrm{S_r}\mathrm{z_2} }_{\mathrm{S}_2}\  \ \ \cdots \ \mathrm{R} \mathrm{L}^{q} \,  \underbrace{\ \mathrm{S_{r}}\mathrm{z_{r-1}} }_{\mathrm{S}_{r-1}}\ \  \mathrm{R}\mathrm{L}^{q} \underbrace{\mathrm{S}_r}_{\mathrm{S}_{r}} \mathrm{C} =  \mathrm{O_h}   *  \mathrm{y_{1}} \ {\mathrm{y_{2}}} \ \cdots {\mathrm{y_{s-1}}} \   \mathrm{C}
\end{multline}
with $\mathrm{O_h}= \mathrm{R} \mathrm{L}^{q}\, \mathrm{S_r}\ \mathrm{C}$  and $\mathrm{z_{i}}=\mathrm{y}_{i} (\mathrm{\overline{y}_{i}})$ if the $\mathrm{R}-$parity of $\mathrm{O_h}$ is even (odd).
Moreover $\mathrm{S_r}$ can not finish with $\mathrm{R} \mathrm{L}^{q-1},$ since if it happened $\mathrm{S_r}=\mathrm{S(m,q-1) \mathrm{R}\mathrm{L}^{q-1}}$ it would follow that\\

{ a) $\mathrm{R}-$parity of $\mathrm{O_h}$ is odd, and since  $\mathrm{{y}_{1}}= \mathrm{R}$ it would follow 
$$\mathrm{S_1} \, \mathrm{R} \mathrm{L}^{q}\, \mathrm{S}_2= \mathrm{S_r} \, \mathrm{L}\, \mathrm{R} \mathrm{L}^{q}\, \mathrm{S}_2= \mathrm{S(m,q-1)} \mathrm{R} \mathrm{L}^{q-1} \, \mathrm{L} \, \mathrm{R} \mathrm{L}^{q} \, \mathrm{S}_2= \mathrm{S(m,q-1)} \, (\mathrm{R} \mathrm{L}^{q})^2 \, \mathrm{S}_2.$$}

{ b) $\mathrm{R}-$parity of $\mathrm{O_h}$ is even, and since $\mathrm{{y}_{2}}= \mathrm{L}$ it would follow
$$\mathrm{S_2}\mathrm{R} \mathrm{L}^{q}\mathrm{S}_3= \mathrm{S_r} \mathrm{L} \mathrm{R} \mathrm{L}^{q}\mathrm{S}_3=\mathrm{S(m,q-1)}  \mathrm{R} \mathrm{L}^{q-1} \, \mathrm{L} \, \mathrm{R} \mathrm{L}^{q} \, \mathrm{S}_3=\mathrm{S(m,q-1)} \, (\mathrm{R} \mathrm{L}^{q})^2 \, \mathrm{S}_3.$$} 

{ In both cases groups  $(\mathrm{R} \mathrm{L}^{q})^2$ are generated, and they do  not appear in the sequence   $\mathrm{P}$.  A similar argument can be used for another 
$\mathrm{z_i}.$
}
  \end{proof}

{\begin{Remark}
Theorem \ref{T:descomposicion} prevents  $\mathrm{S_r}$ from ending in $\mathrm{R} \mathrm{L}^{q-1}.$  This fact shows the existence of composed sequences in which, apparently, there are not repeated subsequences.  It suffices taking $\mathrm{O_h}= \mathrm{R} \mathrm{L}^{q}\, \mathrm{S_r}\ \mathrm{C}$  such that $\mathrm{S_r}$ finish with $\mathrm{R} \mathrm{L}^{q-1}$ for obtaining that $\mathrm{O_h}* \mathrm{O_s}$ is a non-primary sequence with groups $(\mathrm{R} \mathrm{L}^{q})^2$  that hide the repeated subsequences, as we see in the following example. Let be
 $\mathrm{O_h}= \mathrm{R} \mathrm{L}^{5} \mathrm{R^2} \mathrm{RL}^{4} \mathrm{C}$ y $\mathrm{O_s}= \mathrm{R} \mathrm{L}^{2} \mathrm{ R} \mathrm{C}$
 \begin{multline*} \mathrm{O_h}* \mathrm{O_s}= \mathrm{R} \mathrm{L}^{5} \  \mathrm{R^2} \ \mathrm{R} \mathrm{L}^{4} \ \underbrace{{\mathrm{L}}}_{\mathrm{z_{1}}}  \ \mathrm{R} \mathrm{L}^{5} \  \mathrm{R^2} \ \mathrm{R} \mathrm{L}^{4} \ \underbrace{{\mathrm{R}}}_{\mathrm{z_{2}}}  \ \mathrm{R} \mathrm{L}^{5} \ \mathrm{R^2} \ \mathrm{R} \mathrm{L}^{4} \ \underbrace{{\mathrm{R}}}_{\mathrm{z_{3}}}  \ \mathrm{R} \mathrm{L}^{5} \ \mathrm{R^2} \ \mathrm{R} \mathrm{L}^{4} \ \underbrace{{\mathrm{L}}}_{\mathrm{z_{4}}}   \\ \mathrm{R} \mathrm{L}^{5} \  \mathrm{R^2} \ \mathrm{R} \mathrm{L}^{4} \ \mathrm{C}  
 =\mathrm{R} \mathrm{L}^{5} \  \underbrace{\mathrm{R^2}} \  (\mathrm{R} \mathrm{L}^{5})^2 \ \underbrace{ \mathrm{R^2}  \mathrm{R} \mathrm{L}^{4}  {\mathrm{R}}} \,  {\mathrm{R}} \mathrm{L}^{5}\ \ \underbrace{ \mathrm{R^2}  \mathrm{R} \mathrm{L}^{4}  {\mathrm{R} }}\,  \mathrm{R}\mathrm{L}^{5} \ \underbrace{ \mathrm{R^2}} (\mathrm{R} \mathrm{L}^{5})^2 \ \ \mathrm{R^2} \mathrm{R} \mathrm{L}^{4}  \mathrm{C}
\end{multline*}
  In general, given 
  $\mathrm{O_h}= \mathrm{R} \mathrm{L}^{q} \mathrm{H}\mathrm{R} \mathrm{L}^{q-1} \mathrm{C}$ y $\mathrm{O_s}= \mathrm{y_1} \ldots  \mathrm{y_{s-1}} \mathrm{C}$ it happens that
\begin{multline}\label{ec10} \mathrm{O_h}* \mathrm{O_s}= \\ 
\mathrm{R} \mathrm{L}^{q} \  \mathrm{H}  \mathrm{R} \mathrm{L}^{q-1} \ \mathrm{z_1}  \ \mathrm{R} \mathrm{L}^{q}\ \mathrm{H}  \mathrm{R} \mathrm{L}^{q-1} \, \mathrm{z_2} \ \mathrm{R} \mathrm{L}^{q}\cdots \cdots   \mathrm{H} \mathrm{R} \mathrm{L}^{q-1} \ \mathrm{z_{i}} \ \mathrm{R} \mathrm{L}^{q} \cdots   \mathrm{H} \mathrm{R} \mathrm{L}^{q-1} \ \mathrm{z_{j}} \ \mathrm{R} \mathrm{L}^{q} \cdots \\
  \cdots \    \mathrm{H} \mathrm{R} \mathrm{L}^{q-1} \, \mathrm{z_{s-1}} \   \mathrm{R}\mathrm{L}^{q} \mathrm{H} \mathrm{R}\mathrm{L}^{q-1} \mathrm{C}= \\
\mathrm{R} \mathrm{L}^{q}\mathrm{H}  \mathrm{R} \mathrm{L}^{q-1} \,  {\mathrm{z_1}} \, \mathrm{R} \mathrm{L}^{q} \,     \  \ \mathrm{H}  \mathrm{R} \mathrm{L}^{q-1} \,  \mathrm{z_2} \ \mathrm{R}\mathrm{L}^{q} \cdots \underbrace{\mathrm{H}  \mathrm{R} \mathrm{L}^{q-1} \,  \mathrm{R}}\   \mathrm{R} \mathrm{L}^{q} \, 
\cdots \\ \cdots   \underbrace{\mathrm{H} } (\mathrm{R} \mathrm{L}^{q})^2 \,    \cdots   \mathrm{H} \mathrm{R} \mathrm{L}^{q-1} \, \mathrm{z_{s-1}} \   \mathrm{R}\mathrm{L}^{q} \mathrm{H} \mathrm{R}\mathrm{L}^{q-1} \mathrm{C}
\end{multline}
where it has been assumed that ${\mathrm{z_{i}}} =\mathrm{R}$ and  ${\mathrm{z_{j}}} ={\mathrm{L}}$ 
(i.e. the values that are possible for a $z_k$ arbitrary) and 
 ${\mathrm{z_{1}}} =\mathrm{R} ({\mathrm{L}})$,  ${\mathrm{z_{2}}} =\mathrm{L} ({\mathrm{R}})$ if $\mathrm{R}-$parity of $\mathrm{O_h}$ is even (odd). {Note that $\mathrm{S_i}$  in (\ref{ec10}) follow the pattern $\mathrm{H}$ or $\mathrm{H}  \mathrm{R} \mathrm{L}^{q-1} \,  \mathrm{R}$, except the last one (see the above example), that is different because it has not the last letter, fact that allows us to identify the possible  non-primary sequences that contain groups  $(\mathrm{R} \mathrm{L}^{q})^2$. } 
\end{Remark} }

Finally, we shall study the particular case  $\mathrm{O_h}=\mathrm{R} \mathrm{L}^{q-1}\,  \mathrm{C}$

 \begin{thm}\label{descompo}
The MSS-sequence  $$\mathrm{P}= \mathrm{R}  \mathrm{L}^{q} \,(\mathrm{R} \mathrm{L}^{q-1}\mathrm{R})^{n_1}  (\mathrm{R} \mathrm{L}^{q})^{m_1} (\mathrm{R}\mathrm{L}^{q-1}\mathrm{R})^{n_2}   (\mathrm{R} \mathrm{L}^{q})^{m_2}   \cdots (\mathrm{R}\mathrm{L}^{q-1}\mathrm{R})^{n_r}   (\mathrm{R} \mathrm{L}^{q})^{m_r} \mathrm{R} \mathrm{L}^{q-1}\mathrm{C} 
$$ with $n_1,m_1\geq 1$ and  $n_1\geq n_i$ for all $i$, is non-primary with $\mathrm{P}= \mathrm{O_h}* \mathrm{O_s}$ and $ \mathrm{O_h}=\mathrm{R} \mathrm{L}^{q-1}\mathrm{C}$.
\end{thm}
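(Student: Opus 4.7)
The plan is to exhibit the second factor $\mathrm{O_s}$ explicitly, verify the DGP composition $\mathrm{O_h}*\mathrm{O_s}=\mathrm{P}$ by a direct block match, and check that the resulting $\mathrm{O_s}$ is itself an MSS-sequence; the hypothesis $n_1\ge n_i$ will enter at the last stage.

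Since $\mathrm{O_h}=\mathrm{RL}^{q-1}\mathrm{C}$ contains exactly one $\mathrm{R}$, its $\mathrm{R}$-parity is odd; writing $\mathrm{Q}=\mathrm{RL}^{q-1}$, the composition formula gives
\[
\mathrm{O_h}*\mathrm{O_s}=\mathrm{Q}\,\overline{y_1}\,\mathrm{Q}\,\overline{y_2}\cdots\mathrm{Q}\,\overline{y_{s-1}}\,\mathrm{Q}\,\mathrm{C}.
\]
Each glued piece $\mathrm{Q}\overline{y_i}$ equals $\mathrm{RL}^q$ when $y_i=\mathrm{R}$ and $\mathrm{RL}^{q-1}\mathrm{R}$ when $y_i=\mathrm{L}$, so reading $\mathrm{P}$ from left to right the initial $\mathrm{RL}^q$ forces $y_1=\mathrm{R}$, each block $(\mathrm{RL}^{q-1}\mathrm{R})^{n_i}$ contributes $\mathrm{L}^{n_i}$ to $\mathrm{O_s}$, each block $(\mathrm{RL}^{q})^{m_i}$ contributes $\mathrm{R}^{m_i}$, and the trailing $\mathrm{RL}^{q-1}\mathrm{C}$ matches the final $\mathrm{Q}\mathrm{C}$. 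This identifies
\[
\mathrm{O_s}=\mathrm{R}\,\mathrm{L}^{n_1}\mathrm{R}^{m_1}\,\mathrm{L}^{n_2}\mathrm{R}^{m_2}\cdots\mathrm{L}^{n_r}\mathrm{R}^{m_r}\,\mathrm{C},
\]
and a block-by-block substitution then confirms $\mathrm{O_h}*\mathrm{O_s}=\mathrm{P}$.

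To see that $\mathrm{O_s}$ is MSS I would apply Theorem \ref{t1s} and check shift-maximality via the $\lambda$-sequence. A short parity count using Definition \ref{d1} shows that $\lambda_{\mathrm{O_s}}$ opens with the run $1_{n_1+1}$ (produced by $\mathrm{R}\,\mathrm{L}^{n_1}$), and that every further maximal constant run of $\pm 1$ in $\lambda_{\mathrm{O_s}}$ has length either $n_i+1$ (coming from the last entry of an $\mathrm{R}^{m_{i-1}}$ block together with the following $\mathrm{L}^{n_i}$, whose $\lambda$-entries share their sign because the $\mathrm{R}$-parity is unchanged across the $\mathrm{L}$-block) or length $1$ (the alternating interior of each $\mathrm{R}^{m_i}$). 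The assumption $n_1\ge n_i$ therefore caps the initial $1$-run of any $\pm\sigma^k(\lambda_{\mathrm{O_s}})$ at $n_1+1$, and combined with Remark \ref{re12} this already disposes of every shift for which the strict inequality $n_i<n_1$ holds.

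The main obstacle is the equality case $n_i=n_1$ for some $i>1$, when a shift agrees with $\lambda_{\mathrm{O_s}}$ throughout its initial $1_{n_1+1}$ and the comparison must be pushed to the subsequent block. My plan here is to transfer the question to $\mathrm{P}$: under the composition each letter of $\mathrm{O_s}$ is expanded to a $\mathrm{Q}\overline{y_i}$ block of length $q+1$, so a shift $\sigma^k$ of $\mathrm{O_s}$ corresponds to the shift $\sigma^{k(q+1)}$ of $\mathrm{P}$. A careful tracking of the $\mathrm{R}$-parity along these $(q+1)$-blocks should show that $\pm\sigma^k(\lambda_{\mathrm{O_s}})<\lambda_{\mathrm{O_s}}$ holds if and only if $\pm\sigma^{k(q+1)}(\lambda_{\mathrm{P}})<\lambda_{\mathrm{P}}$; since $\mathrm{P}$ is MSS by hypothesis the latter is granted by Theorem \ref{t1s}, hence so is the former. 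This parity-tracking correspondence between shifts of $\mathrm{O_s}$ and of $\mathrm{P}$ is the delicate step of the proof, and once it is settled the decomposition $\mathrm{P}=\mathrm{RL}^{q-1}\mathrm{C}*\mathrm{O_s}$ is a bona fide DGP factorisation, so $\mathrm{P}$ is non-primary.
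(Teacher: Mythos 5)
Your proposal is correct and follows essentially the same route as the paper: both fix $\mathrm{O_h}=\mathrm{RL}^{q-1}\mathrm{C}$ (odd $\mathrm{R}$-parity), match the expanded blocks $\mathrm{RL}^{q-1}\overline{\mathrm{y}_i}$ against $\mathrm{RL}^{q}$ and $\mathrm{RL}^{q-1}\mathrm{R}$ to read off $\mathrm{O_s}=\mathrm{R}\mathrm{L}^{n_1}\mathrm{R}^{m_1}\cdots\mathrm{L}^{n_r}\mathrm{R}^{m_r}\mathrm{C}$, and tie the hypothesis $n_1\ge n_i$ to the shift-maximality of $\mathrm{O_s}$ (the paper via Lemma~\ref{l1}, you via the run-lengths of $\lambda_{\mathrm{O_s}}$). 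The ``delicate step'' you flag for the equality case $n_i=n_1$ is not carried out in the paper either, which stops once the factorization identity is exhibited and the necessity of $n_1\ge n_i$ is noted, so it is not a gap relative to the paper's own argument.
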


 \begin{proof}
 We are looking for a factorization $\mathrm{P}=\mathrm{O_h}*\mathrm{O_s}$. Since $\mathrm{P}$ begins  with    $\mathrm{RL}^{q}$ and ends with  $\mathrm{R} \mathrm{L}^{q-1}\mathrm{C}$, having in mind the  $*-$law composition,  necessarily $\mathrm{O_h}=\mathrm{R} \mathrm{L}^{q-1}\mathrm{C}$ and  $\mathrm{O_s}=\mathrm{y_1}\mathrm{y_2}\cdots \mathrm{y_{s-1}}\mathrm{C}$. As  ${\mathrm{{y_{1}}}}= {\mathrm{R}}$, ${\mathrm{{y_{2}}}}= {\mathrm{L}}$ and the   $\mathrm{R-}$parity of the $\mathrm{O_h}$ is odd, it results that  ${\mathrm{\overline{y_{1}}}}$  has  to be turned into an  $\mathrm{L}$ and  ${\mathrm{\overline{y_{2}}}} =\mathrm{R}$. In order to  get more than one  $\mathrm{RL}^{q}$ block, some of the remaining  ${\mathrm{\overline{y_{i}}}}, i=3,\ldots s-1 $ have to become  $\mathrm{L}$   and  $s\geq 4$. So  

$$ \mathrm{O_h} * \mathrm{O_s} = \mathrm{RL}^{q-1} \, \underbrace{{\mathrm{\overline{y_1}}} }_{\mathrm{L}} \, \mathrm{RL}^{q-1}  \, \underbrace{{\mathrm{\overline{y_{2}}}} }_{\mathrm{R}} \, \mathrm{RL}^{q-1} {{\mathrm{\overline{y_{3}}}} }  \,  \ldots \,  \mathrm{RL}^{q-1} \, {\mathrm{\overline{y_{i}}}} \, \mathrm{RL}^{q-1} \ldots {\mathrm{\overline{y_{s-1}}}} \,  \mathrm{RL}^{q-1}\mathrm{C}
$$
 
Notice that for all  $i\geq 3$  
$${\mathrm{R}\mathrm{L}^{q-1}\mathrm{\overline{y_{i}}}}=\left\{
  \begin{array}{ll}
\mathrm{R} \mathrm{L}^{q}    & \hbox{ if $\mathrm{\overline{y_{i}}}=\mathrm{L}$} \\
    \mathrm{R} \mathrm{L}^{q-1}\mathrm{R}  & \hbox{ si $\mathrm{\overline{y_{i}}}=\mathrm{R}$}
   \end{array}
\right.$$ 

Then
$$\mathrm{P}= \mathrm{R}  \mathrm{L}^{q} \,(\mathrm{R} \mathrm{L}^{q-1}\mathrm{R})^{n_1}  (\mathrm{R} \mathrm{L}^{q})^{m_1} (\mathrm{R}\mathrm{L}^{q-1}\mathrm{R})^{n_2}   (\mathrm{R} \mathrm{L}^{q})^{m_2}   \cdots (\mathrm{R}\mathrm{L}^{q-1}\mathrm{R})^{n_r}   (\mathrm{R} \mathrm{L}^{q})^{m_r} \mathrm{R} \mathrm{L}^{q-1}\mathrm{C} 
$$Note that  $n_1\geq n_i$ for each $i$ because if there is $n_i > n_1$ then $\mathrm{O_s}=\mathrm{R}  \mathrm{L}^{n_1} \ \ldots \mathrm{R} \mathrm{L}^{n_i} \ldots \mathrm{C}$ would not be a MSS-sequence according to lemma \ref{l1}.
\end{proof}

\begin{cor}\label{fact}
The non-primary sequences P factor as  $\mathrm{P}=\mathrm{O_h}*\mathrm{O_s}$ for $\mathrm{O_s}=\mathrm{y_1}\cdots ...\mathrm{y_{s-1}}\mathrm{C}$ and $\mathrm{O_h}$ one of the three following possibilities
\begin{itemize}
\item [i)] $\mathrm{O_h}=\mathrm{RL}^{q}  \mathrm{S_1} (\mathrm{RL}^{q})^{n_2} \mathrm{S_2} \cdots (\mathrm{RL}^{q})^{n_r} \mathrm{S_{r}} \  \mathrm{C}$.
\item [ii)]  $\mathrm{O_h}=\mathrm{RL}^{q}  \mathrm{S_r} \  \mathrm{C}.$
\item [iii)]  $\mathrm{O_h}=\mathrm{RL}^{q-1}  \  \mathrm{C}$
\end{itemize}
\end{cor}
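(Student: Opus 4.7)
The plan is to characterize the possible first factors $\mathrm{O}_h$ in any $\ast$-decomposition of a non-primary MSS-sequence $\mathrm{P}$. By definition $\mathrm{P}=\mathrm{O}_h\ast\mathrm{O}_s$ with both $\mathrm{O}_h,\mathrm{O}_s$ non-trivial MSS-sequences, $\mathrm{O}_s=\mathrm{y}_1\cdots\mathrm{y}_{s-1}\mathrm{C}$. Because $\mathrm{O}_h$ is MSS, it must begin with some block $\mathrm{RL}^{q'}$, $q'\geq 1$; writing $\mathrm{Q}=\mathrm{x}_1\cdots\mathrm{x}_{h-1}$ so that $\mathrm{O}_h=\mathrm{Q}\,\mathrm{C}$, the $\ast$-composition law gives $\mathrm{P}=\mathrm{Q}\,z_1\,\mathrm{Q}\,z_2\cdots z_{s-1}\,\mathrm{Q}\,\mathrm{C}$, where each $z_i\in\{\mathrm{R},\mathrm{L}\}$ equals $\mathrm{y}_i$ or $\overline{\mathrm{y}_i}$ according as the $\mathrm{R}$-parity of $\mathrm{O}_h$ is even or odd. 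I would then match this presentation against the MSS-structure $\mathrm{P}=\mathrm{RL}^q\mathrm{S}_1\cdots\mathrm{C}$ whose initial $\mathrm{L}$-block has length exactly $q$ (from Section III), and use that to pin down $q'$ and $\mathrm{O}_h$.

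I would split the analysis on whether $\mathrm{O}_h$ is exactly $\mathrm{RL}^{q'}\mathrm{C}$ or has further symbols after the opening $\mathrm{RL}^{q'}$. In the latter situation the MSS-structure of $\mathrm{O}_h$ forces the symbol immediately after $\mathrm{L}^{q'}$ to be $\mathrm{R}$, so the initial $\mathrm{L}$-block of $\mathrm{P}$ has length exactly $q'$; matching with $q$ gives $q'=q$ and $\mathrm{O}_h=\mathrm{RL}^q\,\mathrm{S}_1'(\mathrm{RL}^q)^{n_2'}\mathrm{S}_2'\cdots\mathrm{S}_{r'}'\,\mathrm{C}$ in the general form obtained in Section III. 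Splitting $r'\geq 2$ from $r'=1$ yields options (i) and (ii), which correspond respectively to the decomposition displayed in~(\ref{ec6}) and to the factorization given by Theorem~\ref{T:descomposicion} (with $\mathrm{S}_r$ in case (ii) being the unique $\mathrm{S}$-block of $\mathrm{O}_h$, possibly ending in $\mathrm{R}\mathrm{L}^{q-1}$ to account for the hidden-repetition situation flagged in the Remark following that theorem).

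If instead $\mathrm{O}_h=\mathrm{RL}^{q'}\mathrm{C}$, then $\mathrm{Q}=\mathrm{RL}^{q'}$ and the initial $\mathrm{L}$-block of $\mathrm{P}$ has length $q'$ or $q'+1$ depending on whether $z_1=\mathrm{R}$ or $\mathrm{L}$. Since $\mathrm{y}_1=\mathrm{R}$ and $\mathrm{RL}^{q'}\mathrm{C}$ has odd $\mathrm{R}$-parity, $z_1=\overline{\mathrm{y}_1}=\mathrm{L}$, so the initial block has length $q'+1$; equating with $q$ forces $q'=q-1$, which is option (iii) as in Theorems~\ref{T:descomposicion1} and~\ref{descompo}. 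The borderline case $\mathrm{O}_h=\mathrm{RL}^q\mathrm{C}$ is ruled out by the same parity argument (it would yield an initial $\mathrm{L}^{q+1}$ in $\mathrm{P}$), and $q'\leq q-2$ is ruled out because $\mathrm{P}$ would then have an initial $\mathrm{L}$-block of length at most $q-1<q$. The one delicate point—rather bookkeeping than genuine obstacle—is the careful tracking of the $\mathrm{R}$-parity signs for the $z_i$ and verifying that cases (i), (ii), (iii) exhaust all admissible $\mathrm{O}_h$ without overlap; once done, the corollary follows as a direct repackaging of the three preceding theorems.
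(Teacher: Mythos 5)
Your argument is correct and follows the same route the paper takes implicitly: the corollary is stated there without a separate proof, as a summary of Theorem \ref{T:descomposicion1}, the decomposition (\ref{ec6}), Theorem \ref{T:descomposicion} and Theorem \ref{descompo}, and the exhaustiveness step you make explicit (matching the length of the initial $\mathrm{L}$-block of $\mathrm{P}$ against that of $\mathrm{O_h}$, with the parity of $\mathrm{O_h}$ forcing $z_1=\overline{\mathrm{y}_1}$) is exactly the observation already used in the proof of Theorem \ref{T:descomposicion1} to rule out all $\mathrm{O_h}$ other than $\mathrm{RL}^{q}\cdots\mathrm{C}$ and $\mathrm{RL}^{q-1}\mathrm{C}$. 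The only nit is your claim that $q'\geq 1$; $\mathrm{O_h}=\mathrm{RC}$ (i.e.\ $q'=0$) is admissible and is needed for case (iii) when $q=1$, but your computation $q'=q-1$ already accommodates it.
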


\begin{Remark}
In particular, the non-primary sequences with a $(\mathrm{RL}^q)^{n_i}$ group, $\, n_i\geq 3$, for some $i$, are either of the type given by theorem \ref{descompo} or all their  $\mathrm{S_i}$ groups are repeated except, perhaps, the last one (the last letter is missing) as it is shown in expression (\ref{ec6}). 
It is important to remark that  if the 
 MSS-sequence is non-primary, the sequence  $\mathrm{O_h}$ appears just in the 
 final part of  the sequence, and this allows us to deduce who is  $\mathrm{O_h}$  and so who is $\mathrm{O_s}.$ 
 For instance, let be
  $\mathrm{P}= \mathrm{RL}^{4}(\mathrm{RL^3R})^2 (\mathrm{RL}^{4})^3 \, \mathrm{RL^3R} \, \mathrm{RL}^{3}\mathrm{C}$. We deduce that $\mathrm{O_h}=\mathrm{RL}^{3}\mathrm{C}$ y $\mathrm{O_s}=\mathrm{R}\mathrm{L^2} \mathrm{R^3}\mathrm{L}\mathrm{C}$

\bigskip

\textbf{Recursive decomposition and factorization.} Non-primary sequences factor according to corollary \ref{fact}. 
When $\mathrm{O_h}$ corresponds to case i) in corollary \ref{fact} it happens that $\mathrm{O_h}$ is  either primary or it admits one of the factorizations shown in this section.                                                                                                                                                                                
Notice that when decomposing $\mathrm{P}$ as $\mathrm{RL}^q \mathrm{S}(m_1,q-1)\mathrm{C} *
\mathrm{{y_1 } \, {y_2}\ldots {y_{s-1}} \,C} $(case ii) in corollary \ref{fact}) we have that  $\mathrm{RL}^q \mathrm{S}(m_1,q-1)\mathrm{C}$ is either primary or non-primary of type 
 $\mathrm{RL}^{q-1}\mathrm{C}* \mathrm{RL}^{\frac{p}{q+1}-2}\mathrm{C} $ in agreement with Theorem \ref{T:descomposicion1}, where sequences $\mathrm{RL}^n\mathrm{C}$ are  primary. In the case iii) of corollary \ref{fact} it is $\mathrm{O_h}=\mathrm{RL}^{q-1}\mathrm{C}$ that is a primary sequence.

On the other side, $
\mathrm{{y_1 } \, {y_2}\ldots {y_{s-1}} \,C}$  is either primary or
it admits decomposition,
 so the decomposition can follow in a recursive way generating the sequence factorization (regarding this factorization as the set of the primary sequences in which it is decomposed).
\end{Remark}

\section{Cardinality of non-primary sequences}

\subsection{ Cardinality of $\mathrm{RL^q S}(m,q-1)\mathrm{C}$ sequences.}

According to Theorem \ref{T:descomposicion1} this kind of sequence has an unique decomposition given by 
\begin{equation} \label{ec_compo2} \mathrm{RL}^{q-1}\mathrm{C} * \mathrm{RL}^{\frac{p}{q+1}-2}\mathrm{C}
\end{equation}
where  $q+1$  is a proper  divisor of $p$, the length of $\mathrm{P}$.

Notice that, according to the decomposition,  $q+1=p$ , and  $q=0$ (which corresponds to { the divisors $p$ and $1$ of $p$ respectively}) are forbidden because then negative powers would appear in expression (\ref{ec_compo2}).
For a fixed period $p$, the uniqueness of the sequence decomposition, implies that the cardinality of the set $\mathrm{RL}^q \mathrm{S}(m,q-1)\mathrm{C}$ must come from the proper divisors of  $p$, according to (\ref{ec_compo2}).

  Let   $p=\prod_{i=1}^{\rho}\, {p_i}^{a_i}$
  be the decomposition of $p$ in prime factors. Then the set of its divisors
  is  $D_p = \{ \prod_{i=1}^{\rho}\, {p_i}^{c_i}: 0\leq  c_i\leq a_i \}$.

 So the number of this type of non-primary sequences is $\text{cardinality}(D_p)-2$, where $-2$ corresponds to remove $1$ and $p$ as divisors.
 Notice that we take  $q+1$ through $D_p$.

\medskip

\subsection{Cardinality of $
\mathrm{R} \mathrm{L}^{q} \mathrm{S}(m_1,q-1)  $}$ \cdots \mathrm{R} \mathrm{L}^{q}\mathrm{S}(m_r,q-1) \mathrm{C}
$ sequences.

\smallskip

The non-primary sequences  $\mathrm{P}$ given by Theorem  \ref{T:descomposicion} are $
\mathrm{P}= \mathrm{RL}^q\mathrm{S}(m-1,q-1)\mathrm{C} * \mathrm{y_1\cdots y_ {s-1} C =} \mathrm{O}_{d} * \mathrm{O}_{p/d}$
 where $d \in D_p$. {The presence of the block $\mathrm{RL}^q$ in the sequences $\mathrm{P}$ implies $q>0$.  Since $q>0$ it follows that $d \geq 3$  (the case $d=3$ corresponds to $O_{d}= \mathrm{RLC}).$}

i) For $q$ fixed  $Card(\mathrm{P})=  Card(\mathrm{S}(m-1,q-1))$ {because, according to theorem \text{\ref{T:descomposicion}}, the sequences $\mathrm{P}$ are just the sequences resulting from joining  $p/d$ identical subsequences, where each one of them is in the set $\mathrm{RL}^q\mathrm{S(m-1,q-1)}$ with $\mathrm{RL}^q$  fixed.

ii) For $d$ fixed, the value of $q$ in $\mathrm{RL}^q\mathrm{S}(m-1,q-1) \mathrm{C}$ can take values in the set  $q= 1, \dots , d -2$, as $d$ is the period of $\mathrm{O}_{d}$ (the case $q=d - 2$ corresponds to $\mathrm{O}_{d}=\mathrm{RL^qC}$ with  $m=1$ and  $\mathrm{S(m-1,q-1)}= \emptyset$}).

On the other hand
the period of $\mathrm{RL}^q\mathrm{S}(m-1,q-1)\mathrm{C}$ is $q+(m-1)+2=d$, so $m-1=d -(q+2)$ and, for any $q$, we have

\begin{multline*}
Card(\mathrm{P}) = \sum _{q=1} ^{d -2} Card(\mathrm{S}(m-1,q-1))
= \sum _{q=1} ^{d -2}  Card(\mathrm{S}(d-(q+2),q-1))
\end{multline*}

iii) Considering every divisor  $d \in D_p$ it follows, for a fixed period $p$, that
$$
Card(\mathrm{P})=  \sum _{d \in {\bar D}_p} \sum _{q=1} ^{d -2}  Card(S(d-(q+2),q-1)) \label{sumaiii}
$$
where ${\bar D}_p$ denotes the set of proper divisors of $p$.

\medskip

Finally we have to calculate $\mathrm{S}(m,q-1)$ with $m=d-(q+2)$.
In order to count the number of sequences 
 $\mathrm{S}(m,q-1)$
 we will classify those lists regarding the number of $\mathrm{Rs}$ that contain:
 $$
\mathrm{S}(m,q-1)= \bigcup _ {k=[ \frac{m-1}{q} ]} ^{m-1} \mathrm{S}(m,q-1,k)  
$$
where $\mathrm{S}(m,q-1,k)$ denotes the set of lists with length  $m$, consisting on symbols $\mathrm{R}$ and $\mathrm{L}$, starting with $\mathrm{R}$, containing  $q-1$ consecutive $\mathrm{L}$s at most and  $k$ $\mathrm{R}$s apart from the first one.

There is a bijection between $\mathrm{S}(m,q-1,k)$ and the set of solutions of
\begin{equation} \label{Bij}
\begin{cases}
x_1+x_2+ \cdots + x_{k+1} = m-k-1 & \\
0 \le x_i\le q-1, \ \ j= 1 \dots k+1 \\
\end{cases}
\end{equation}

As
$$
|\mathrm{S}(m,q-1)|= \sum _k |\mathrm{S}_k (m,q-1) |
$$
we have to estimate the number of solutions of (\ref{Bij}).
With that objective in mind we define
$$
A_{i_1i_2\cdots i_r}= \text{solutions of }
\begin{cases}
x_1+x_2+ \cdots + x_{k+1} = m-k-1 & \\
x_i \ge q \ \forall i \in \{ i_1, \cdots, i_r\} & \\
x_j\ge 0, \ \forall j\not \in  \{ i_1, \cdots, i_r\}
\end{cases}
$$

Then, by  inclusion/exclusion, we have that
\begin{multline*}
|\mathrm{S}_k(m,q-1)|= \\ 
|\mathrm{A}_\emptyset |- \sum _ {i=1} ^{ k+1} |\mathrm{A}_i| + \sum _{i_1<i_2} |\mathrm{A}_{i_1i_2}| - \cdots + (-1)^{k} |\mathrm{A}_{12\cdots k+1}|
\end{multline*}

It is known  \cite{commet} that the number of solutions of
$$
\begin{cases}
z_1+x_2+ \cdots + z_{\alpha} =  n \\
z_i \ge p_i, i=1 \dots \alpha
\end{cases}= 
\begin{pmatrix}
n+\alpha-1-\sum_{i=1}^\alpha p_i \\ \alpha -1
\end{pmatrix}
$$
so
\begin{multline*}
|\mathrm{A}_{i_1i_2\cdots i_r}|= \\
\begin{pmatrix}
(m-1-k)+(k+1)-1-r_q \\ k
\end{pmatrix}
\begin{pmatrix}
m-1-r_q \\ k
\end{pmatrix}
\end{multline*}

Thus, since in every sum 
 $\displaystyle{\sum _{i_1<i_2< \cdots <i_r}}$ the number of terms is 
$
\begin{pmatrix}
k+1 \\ r
\end{pmatrix},
$
we get finally that 
\begin{multline*}
Card (\mathrm{S}(m,q-1))= \\
 \sum_{k=[(m-1)/q]}^m \,  \sum_{r=0}^{\min \{k+1, \frac{m-k}{q}\}}
\binom{k+1}{r} \, \binom{m-1-rq}{k}  \, (-1)^r \\
\end{multline*}
 
\bigskip

\section{Conclusion  and its relation with open-problems in Dynamical Systems}

The combinatorial descriptions of one-dimensional discrete systems $x_{n+1}=f(x_n)$  ruled by unimodal {round-top concave} functions (see (\ref{eqn1})) has a pending problem since Beyer, Mauldin and Stein gave a theorem to decide whether {an admissible sequence was an MSS-sequence or not}  (see theorem \ref{t1}) . Namely, which is the explicit expression of MSS-sequences of these dynamical systems. We have solved this problem by proving that the structure of MSS-sequences is 
 \begin{equation} \label{eqn:P-conclusiones}
 {\small
 \mathrm{P}=\mathrm{R} \mathrm{L}^{q}\, \mathrm{S}_1(\mathrm{R} \mathrm{L}^{q})^{n_2}  \mathrm{S}_{2} \cdots  (\mathrm{R} \mathrm{L}^{q})^{n_r}\mathrm{S}_r \mathrm{C}
 }
\end{equation}
That is, sequences result from  linking alternatively  $\mathrm{RL}^q$ and $\mathrm{S}_i$ blocks, where $\mathrm{S}_i$ are sequences of $\mathrm{R}$s and $\mathrm{L}$s  such that the longest consecutive sequence of $\mathrm{L}$s has $q-1$ symbols. 
Theorem \ref{5} states how  $\mathrm{S}_i$ and $(\mathrm{RL}^q)^{n_k}$ must be linked in order to (\ref{eqn:P-conclusiones}) be an MSS-sequence. Perhaps, the most striking fact we have found is that the condition for being a MSS-sequence is ruled by $S_1$,
 the subsequence contained between the first two   
$\mathrm{RL}^q$
blocks,   since the rest of $\mathrm{S}_i,\; i>1$  are built from $\mathrm{S}_1\mathrm{RL}^q$ (section IV). As the $\mathrm{RL}^q$ blocks are trivial, the building of the MSS-sequence is determined by $\mathrm{S_i}$ and therefore by $\mathrm{S_1}$. An inheritance process is manifested, that must be studied.
On the other hand, the fact that $S_i$ is derived from $\mathrm{S}_1$ implies that if $\mathrm{S}_1$ are constructed then we are able to construct $\mathrm{S}_i$, and so the sequence $\mathrm{P}.$ The construction of $S_1$ is given in Section IV (a) and $\mathrm{S}_i$ in IV (b).
The importance of $\mathrm{S}_1$ is not only shown by the fact that $\mathrm{S}_i$ groups are constructed from $\mathrm{S}_1$, but also because of the role played by $\mathrm{S}_1$ in the composition/decomposition of non-primary sequences (theorem \ref{T:descomposicion}).

The fact of having an explicit form of MSS-sequences has led us to characterize which are non-primary and how they are factorized as composition of primary sequences (Section V). In this factorization process $\mathrm{S}_1$ plays an important role: its cardinality, along with the factorization theorems, has led us to calculate the cardinality of different non-primary sequences.
Notice that factorization theorems (Section V) impose very restrictive conditions to be non-primary sequences, i.e. for sequences of fixed length primary sequences will be much more abundant than non-primary, as it had always been noticed \cite{De}.

Our results can supply new  approaches and tools for open-problems in dynamical systems. As we know the explicit expression of all MSS-sequences, we can calculate which orbits are near to each other in the phase space, and find out how they cluster. This kind of clusters plays an important role in quantum chaos \cite{boris}.

An emerging problem, related to clustering of orbits, is what we might call the inverse problem: we do not want to visit certain zones of an attractor. The MSS-sequences, lacking the sequence coding that zone, will not visit this zone \cite{Yu}.

Other uses are possible.  As MSS-sequences are known according to our theorems, we can use them to calculate where they are located in parameter space, by using the algorithm given by Myrberg \cite{ref_1}. Compare  how easy it is locating the sequences using Myrberg algorithm with the extraordinarily big computational cost required when sequences are looked for using brute force.  These big computational costs limit numerical experiments to sequences of low period \cite{ref_bajo}, so it is very useful having a simple method for locating sequences in the parameter space.

In section V we identify the non-primary sequences, and so the primary sequences. Primary cycles are important to get a more accurate approximation of strange attractors, by shadowing long orbits when cycle expansions are used \cite{artu}. 

Finally, we have calculated the cardinality of different sets of  non-primary sequences. In this knowledge lies the foundation to calculate topological entropy \cite{Ban}.

\vspace{1cm}

\textbf{Acknowledgements.-}
The second and third authors are partially supported by Ministerio de Economía, Industria y Competitividad under the grant TRA2016-76914-C3-3-P Thanks are given to Daniel Rodríguez-Pérez and Pablo Fernández-Gallardo who improved the final version of this paper. 
\vspace{1cm}

\end{document}